\theoremstyle{definition}
\newtheorem{definition}{Definition}
\newtheorem{remark}[definition]{Remark}
\theoremstyle{mytheorem}
\newtheorem{theorem}[definition]{Theorem}
\newtheorem{proposition}[definition]{Proposition}
\newcommand{\equlaw}{\stackrel{(d)}{=}}
\newcommand{\var}{\text{\rm{\bf Var}}}
\newcommand{\Cov}{\text{\rm{\bf Cov}} }
\newcommand{\Var}{\var}
\renewcommand{\P}{\mathbf  P}
\newcommand{\E}{\mathbf  E}
\newcommand{\sinc}{\text{\rm{sinc}}}
\author{Rapha\"el Lachi\`eze-Rey\footnote{Universit\'e de Paris, Laboratoire MAP5, UMR CNRS 8145, 45 Rue des Saints-P\`eres, F-75006 Paris, France.}}
\title{Variance linearity for real Gaussian zeros}
\begin{document}
\maketitle 

{\bf Abstract}
We investigate the zero set of a stationary Gaussian process on the real line, and in particular give  lower bounds for the variance of the number of points and of linear statistics on a large interval, in all generality. We prove that this point process is  never hyperuniform, i.e. the variance is at least linear, and give a necessary condition to have linear variance, which is close to be sufficient. We study the class of symmetric Bernoulli convolutions and give an example where the zero set is   maximally rigid,  weakly mixing, and not hyperuniform.\\

%
%
%
 
 {\bf Keywords:} Gaussian fields, point processes, crossings, nodal set, excursion, hyperuniformity, rigidity, chaos decomposition, linear statistics\\
 
 {\bf AMS Classification: 60G10, 60G15, 60G55}
 
 \section{Background and motivation}

We study here  the zero set of a  Gaussian stationary process $\mathsf  X$:
\begin{align*}
\mathsf  Z_{\mathsf  X}:=\{t\in \mathbb{R}: \mathsf  X(t)=0\}.
\end{align*}
The study of Gaussian zeros has emerged in the fifties, with pioneering works of Kac and Rice \cite{Kac43}, and subsequent applications in the fields of telecommunications and signal processing, ocean waves, and random mechanics.  Rigorous results can be found in the book of Cramer and Leadbetter  \cite{CL67}, along with the first second order results.  In the following decades, most significant second order analyses have been made by   Cuzick \cite{Cuz}, Slud \cite{Slud91}, Kratz \& L\'eon \cite{KL01}, see the survey \cite{Kratz-survey} and references therein. The weakest available condition for non-degenerate asymptotic variance is the square integrability of the covariance function, in \cite{Slud91}.
 We give here a variance lower bound, under virtually no hypotheses, that implies  that the variance is always at least linear, and give a necessary condition very close to known sufficient conditions for it to be actually linear. Linear statistics on Gaussian nodal sets have also been the topic of many recent results, see for instance \cite{Wigman10} on the sphere, or \cite{BKPV,NS11} for zeros of Gaussian entire functions; these statistics can be used in particular to prove the rigidity of a point process, see  \cite{GP17}. We also explore the class of symmetric Bernoulli convolutions and focus on an example whose spectral measure has a Cantor nature. 
 
 Feldheim \cite{Feldheim18} has  studied zeros of Gaussian analytic functions (GAFs) whose law is invariant only under horizontal shifts, reduced to a horizontal strip of the form $\mathbb{R}\times [a,b]$, with $-\infty <a<b<\infty $.  In a related work, Buckley and Feldheim \cite{BF18} study the winding number of a GAF $\mathsf X:\mathbb{R}\mapsto \mathbb  C$. 
A strong motivation of their work is the analogy between the winding number and the number of zeros.  
Some convenient identities coming from complex analysis can provide explicit expressions for these indexes and their moments, whereas the number of zeros of a real Gaussian process is mostly studied in the literature through its Wiener-Ito expansion, which is sometimes not amenable to analysis; one can also use directly  Kac-Rice formula \cite{AW} but this does not ease the task of estimating the variance. In particular, we cannot obtain an expression as explicit as (6) in \cite{BF18} or  Section 3.2 in \cite{Feldheim18} for the number of zeros.
It will be interesting to observe in Section \ref{sec:Gaussian-zeros} that the results  about  GAFs are similar to the results we obtain here: the variance of the number of zeros (or the winding number) is always at least linear, it is not linear if some square integrability conditions related to the covariance functions are not satisfied, and it is quadratic if the spectral measure has atoms. Very recently, Aza\"is and Dalmao \cite{AD20} have studied the winding number of planar Gaussian fields on the real line, realising the Wiener-Ito counterpart to the work \cite{BF18}, obtaining similar results without analyticity assumptions.

Another motivation is to explore the behaviour of Gaussian zeros in the light of the concepts of hyperuniformity and rigidity. Let us introduce more formally point processes before going further. Let
 $
\mathscr  N(\mathbb{R}) 
$
be the space of locally finite subsets of $\mathbb{R}$, endowed with the $\sigma $-algebra $\mathscr{B}(\mathscr  N(\mathbb{R}))$ generated by the mappings
\begin{align*}
\varphi _{A}:\mathsf  Z\in \mathscr  N(\mathbb{R})\mapsto \# \mathsf Z\cap A
\end{align*}
for $A$ a Borel subset of $\mathbb{R}$, where $\#$ denotes the cardinality of a set.  A {\it point process} $\mathsf Z$ is a measurable mapping from an underlying probability space $(\Omega ;\mathscr {A},\P )$ to $(\mathscr  N(\mathbb{R}),\mathscr  B(\mathscr  N(\mathbb{R})))$; $\mathsf Z$ is furthermore stationary if $\mathsf Z+x\equlaw \mathsf Z$ for $x\in \mathbb{R},$ and we denote by $V_\mathsf Z (T)=\Var(\# \mathsf Z\cap [-T,T])$. 
In physics of condensed matters and statistical physics, a great attention has been given recently to the phenomenon of {\it  fluctuation suppression} of particle systems, which can be incarnated by  the property of {\it hyperuniformity}:
a stationary point process $\mathsf Z$ on $\mathbb{R}$ is {\it hyperuniform}, or {\it superhomegeneous},  if 
\begin{align*}
\liminf_{T\to \infty }\frac{V_\mathsf Z (T)}{T}=0.
\end{align*}
A striking property of many systems with suppressed fluctuations  is their  {\it rigidity}, i.e. the complete determination of a feature of the point process in a bounded domain given its configuration outside the domain:
for a measurable mapping $\varphi :\mathscr  N(\mathbb{R})\to \mathbb{R}$, we say a point process $\mathsf Z$ is $\varphi ${\it -rigid} if for all $T>0$, 
\begin{align*}
\varphi (\mathsf Z\cap [-T,T])\in \sigma ( \mathsf Z\cap [-T,T] ^{c}).
\end{align*}
In particular, we say it is {\it number-rigid} if it is $\varphi $-rigid for $\varphi =\#$, and {\it maximally rigid} if it is $\varphi $-rigid for every bounded measurable $\varphi $, i.e. if $\mathsf Z\cap [-T, T]$ is completely determined by $\mathsf Z\cap [-T,T]^{c}$ for $T>0$.

This phenomenon has been much documented, with many rigourous results proven in dimension $1$ and $2$, and recently  in dimension $3$  by Chatterjee on hierarchichal Coulomb gases \cite{Cha17}. The link between  rigidity and hyperuniformity is the topic of many works in the recent years.
 Most notable examples of stationary hyperuniform models are determinantal processes   whose kernels are projection operators \cite{GL17}, zeros of planar Gaussian analytic functions \cite{GP17}, some Coulomb systems \cite{GL17}, the $\beta $-sine processes for $\beta >0$ \cite{DHLM}, perturbed lattices \cite{PS14}, and also many non-stationary models, with a more general definition of hyperuniformity. In all these models, the point process is hyperuniform and $\varphi $-rigid, at least for $\varphi (\mathsf Z)=\#\mathsf Z$.   It is widely believed that hyperuniformity is required to have rigidity \cite{GL17}, and the usual  method of proof of rigidity, introduced in \cite{GP17}, involves linear statistics and implictly assumes hyperuniformity  (see \cite{DHLM} for an alternative method involving DLR equations). The phenomenon of rigidity, although mathematically intriguing, has also practical uses  in some percolation models related to the underlying point process, see \cite{GKP}.

We exploit the variance results derived in this paper to exhibit a non artificial example of a point process in dimension $1$ that experiences the strongest possible form of rigidity, without being hyperuniform. Such examples are actually not so hard to build on $\mathbb{R}^{d}$, take for instance the grid
\begin{align*}
\mathsf Z_{d}=L(\mathbb{ Z}^{d} +U)
\end{align*}
where $L$ is some non-trivial non-negative $L^{2}$ real random variable and $U$ is independent and uniformly distributed on $[0,1)^{d}$.  It is also clear that there is absolutely no form of asymptotic independence in $\mathsf Z_{d}$, in the sense that given two events $A,B$, the events $\mathsf Z_{d}\in  A$ and $\mathsf Z+t\in  B$ have no reason to be uncorrelated for large $T$. For this reason, we might ask for additional desirable properties of such counter-examples. 

\begin{definition}We say that a stationary point process $\mathsf Z$ of $\mathbb{R}$ is  {\it weakly mixing} if for all events $A,B\in  \mathscr{B} (\mathscr  N(\mathbb{R})),$
\begin{align*}
\lim_{T\to \infty }T^{-1}\int_{0}^{T}|\P (\mathsf Z\in A,(\mathsf Z+t)\in B)-\P (\mathsf Z\in A)\P (\mathsf Z\in B)|dt= 0.
\end{align*}
\end{definition}
 The randomly scaled grid $\mathsf  Z_{1}$ does not satisfy this property because it is not even ergodic.
We give here a stationary Gaussian process $\mathsf  X$ whose spectral measure belongs to the class of {\it symmetric Bernoulli convolutions} and whose zero set $\mathsf  Z_{\mathsf  X}$    is weakly mixing, maximally rigid, and not hyperuniform.  See Theorem \ref{thm:main-variance},  Propositions \ref{prop:factorial} and  \ref{prop:variance-factorial}.

Regarding rigidity and hyperuniformity, stationary point processes obtained as the zeros of a random Gaussian functions have mainly been studied for {\it Gaussian analytic functions}   in the complex plane, see \cite{BKPV}. The joint requirements of complex Gaussianity, analycity, and stationarity have reduced the class of zeros to essentially one model, up to rescaling, which happens to be rigid and hyperuniform  \cite{BKPV}. 
The links between Gaussian zeros and rigidity is yet essentially to discover.\\

We present in Section \ref{sec:Gaussian} some results and definitions about Gaussian processes and their zeros, variance lower bounds, and we give a condition for maximal rigidity. We also properly explore the analogies with variance results for GAFs. In Section \ref{sec:bernoulli} we introduce the class of symmetric Bernoulli convolutions, give some properties and  examples, in particular an example yielding maximally rigid non hyperuniform zeros. In Section \ref{sec:chaotic} we prove the lower bounds on the variance by using chaotic decompositions of the number of zeros.

\section{Gaussian zeroes}
\label{sec:Gaussian}
We introduce here basic notions about Gaussian processes. See the monograph of Aza\"is \& Wschebor \cite{AW} for background material and advanced results about Gaussian processes and their level sets.
A real Gaussian process is a random function $\mathsf X:\mathbb{R}\to \mathbb{R}$ for which any finite dimensional distribution $(\mathsf X(t_{1}),\dots ,\mathsf X(t_{n})),t_{1},\dots ,t_{n}\in \mathbb{R}$, is a Gaussian vector with covariance matrix $(\mathcal  C(t_{i},t_{j}))_{1\leqslant i,j\leqslant n}$, where $\mathcal  C$ is the {\it  covariance function} of the process $\mathsf X$.  The law of $\mathsf X$ is entirely determined by $\mathcal  C$, and the stationarity of $\mathsf X$, defined by $\mathsf X(\cdot +t)\equlaw \mathsf X$ for $t\in \mathbb{R}$, is characterised by the existence of a function $C_{\mathsf X}$, called {\it reduced covariance function}, such that $\mathcal  C(t,s)=C_{\mathsf X}(t-s),t,s\in \mathbb{R}$. Such a function is known to admit a spectral representation of the form 
\begin{align*}
C_{\mathsf X}(t)=\int_{\mathbb{R}}e^{ixt}\mu_{\mathsf X} (dx)=2\int_{0}^{\infty }\cos(xt)\mu _{\mathsf X}(dx),t\in \mathbb{R},
\end{align*}
where $\mu _{\mathsf X}$ is the (symmetric) {\it spectral measure} of $\mathsf X$ on $\mathbb{R}$.
We say that $\mathsf X$ is {\it degenerate} if $C_{\mathsf X}(x)=A\cos(\varphi x)$ for some $A\geqslant 0,\varphi \in \mathbb{R}$, or equivalently if $\mu _{\mathsf X}$ is formed by two symmetric atoms with same mass.

\subsection{Zero set}

\label{sec:Gaussian-zeros}

We are primarily interested in the variance $V_{\mathsf  Z _{\mathsf X}}(T)$ of $\#(\mathsf  Z _{\mathsf X}\cap [-T,T])$, denoted by $V_{\mathsf X}(T)$ for short, where $X$ is a Gaussian stationary process.
 The Kac-Rice formula, established in the fourties \cite{Kac43}, is the starting point of most  rigourous subsequent works. Cramer and Leadbetter \cite{CL67}  proved  that it is necessary for $V_{\mathsf X}(T)$  to be finite for every $T>0$ that $C_{\mathsf X}$ is twice differentiable in $0$  and for some $\delta >0$, 
\begin{align}
\label{eq:geman}
\int_{0}^{\delta }\frac{1}{t^{2}}(C_{\mathsf X}'(t)-C_{\mathsf X}''(0)t)dt<\infty .
\end{align}Geman \cite{Geman} then proved that the condition is actually necessary on any non-negligible interval, and this condition is thus referred to as {\it Geman's condition.}   
When this condition is satisfied, we use without loss of generality the convention $C_{\mathsf X}''(0)=-1$, equivalent to replacing $\mathsf X(t)$ by $\mathsf X(\alpha t)$ with $\alpha =(-C_{\mathsf X}''(0))^{-1/2}$.

By using approximation by $m$-dependent fields, Cuzick \cite{Cuz}  proves that the variance is not super-linear, that is $\lim_{T\to \infty }T^{-1}V_{\mathsf X}(T)$ exists and is finite, if Geman's condition holds, as well as
\begin{align}
\label{eq:C2-L2}
\int_{}C_{\mathsf X}(t)^{2}dx<\infty ,\int_{}C_{\mathsf X}''(t)^{2}dx<\infty.
\end{align}
 He actually proves a central limit theorem under the additional assumption that $$\lim_{T\to \infty }T^{-1}V_{\mathsf X}(T)>0.$$ Computing the  decomposition of the number of zeros with respect to Wiener-Ito integrals based on the field $\mathsf X$, Slud \cite{Slud91}  proves that \eqref{eq:C2-L2} is sufficient for the asymptotic variance to be finite and positive, and for the central limit theorem to hold. Kratz \& L\'eon \cite{KL01} compute the chaos decomposition with respect to the joint process formed by the field $\mathsf X$ and its derivative $\mathsf X'$ and ease some computations related to the decomposition, allowing them to generalise the results to other crossing problems; this is the approach we will use at Section \ref{sec:chaotic}.

We present here our results regarding the variance lower bound for the number of zeros. The method extends  to linear statistics 
\begin{align*}
N_\mathsf  X({\varphi }):=\sum_{z\in \mathsf  X}\varphi (z)
\end{align*} for $\varphi $ in the space $\mathcal{C}_{b} $ of compactly supported piecewise continuous functions $\varphi $ such that $\int_{\mathbb{R}}\varphi (x)dx\neq 0$. We are interested in the asymptotics of $N_\mathsf  X({\varphi _{T}})$ for $\varphi _{T}(\cdot )=\varphi (T^{-1}\cdot )$ as $T\to \infty $. We abuse the notation $V_{\mathsf  X}(\varphi )=\Var(N_\mathsf  X({\varphi }))$, with $V_{\mathsf  X}(T)=V_{\mathsf  X}(\varphi _{T})$ if $\varphi $ is the indicator function of $[-1,1]$. Say that $\mu _{X}$ is {\it regular} in $x\in \mathbb{R}$ if it admits a $L^{2}$ density on some neighbourhood of $x.$ 

\begin{theorem}
\label{thm:main-variance}
Let $\mathsf X$ be a non-degenerate stationary Gaussian field on $\mathbb{R}$, $\varphi \in \mathcal{C}_{b}$ . Assume either that $\varphi $ is non-negative, or that Geman's condition is satisfied. Then
\begin{itemize}
\item[(i)] The variance is at least linear, i.e. $$\liminf_{T> {0}}T^{-1}V_{\mathsf X}(\varphi _{T})\in (0,\infty ].$$
\item[(ii)] the condition $C''+C\in L^{2}(\mathbb{R})$ is  necessary for linear fluctuations, i.e. for $$\liminf_{T\to \infty }T^{-1}V_{\mathsf X}(\varphi _{T})<\infty. $$
If $\mu_{X}$ is regular in $\pm 1$, the condition $C''+C\in L^{2}(\mathbb{R})$ is equivalent to \eqref{eq:C2-L2}, which is sufficient for linear fluctuations of $V_{\mathsf  X}(T)$ (\cite{Cuz}).
\item[ (iii)]If $\mu _{\mathsf X}$ has an atom in $\mathbb{R}\setminus \{-1,1\}$, $\inf_{T> 0}T^{-2}V_{\mathsf X}(\varphi _{T})>0.$
\end{itemize}
\end{theorem}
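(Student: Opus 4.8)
The plan is to bound $V_{\mathsf X}(\varphi_T)$ from below by the variance of the second Wiener--It\^o chaos component of the weighted count, which can be computed in closed form. First assume Geman's condition \eqref{eq:geman}, so that $N:=N_{\mathsf X}(\varphi_T)$ lies in $L^2$ and, following Kratz \& L\'eon, admits a chaos expansion $N=\sum_{q\ge 0}N_q$ with respect to the Gaussian space generated by $(\mathsf X(t),\mathsf X'(t))_{t\in\mathbb R}$, normalised by $C(0)=1$, $-C''(0)=1$. By orthogonality of chaoses, $V_{\mathsf X}(\varphi_T)=\sum_{q\ge 1}\Var(N_q)\ge \Var(N_2)$, so it suffices to analyse $N_2$. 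Writing the counting measure formally as $\delta_0(\mathsf X(t))\,|\mathsf X'(t)|\,dt$ and expanding $(x,y)\mapsto\delta_0(x)|y|$ in the Hermite basis of the \emph{independent} standard pair $(\mathsf X(t),\mathsf X'(t))$, all odd chaoses vanish because this map is even, and the $H_1\otimes H_1$ term vanishes because $\delta_0$ is even. Computing the two surviving coefficients gives
\begin{align*}
N_2=\frac{1}{2\pi}\int_{\mathbb R}\varphi_T(t)\bigl(\mathsf X'(t)^2-\mathsf X(t)^2\bigr)\,dt .
\end{align*}

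I would then compute $\Var(N_2)$ from $\E[H_2(U)H_2(V)]=2\,\Cov(U,V)^2$ for standard jointly Gaussian $U,V$, together with $\E[\mathsf X(s)\mathsf X(t)]=C(s-t)$, $\E[\mathsf X'(s)\mathsf X'(t)]=-C''(s-t)$ and cross-covariance $\pm C'(s-t)$ (squared below), obtaining
\begin{align*}
\Var(N_2)=\frac{1}{2\pi^2}\iint \varphi_T(s)\varphi_T(t)\bigl[C^2+C''^2-2C'^2\bigr](s-t)\,ds\,dt .
\end{align*}
The crux is the algebraic identity $C^2+C''^2-2C'^2=(C+C'')^2-(C^2)''$: against the compactly supported autocorrelation $\Phi(w)=\int\varphi(v+w)\varphi(v)\,dv$, the pure-derivative term $(C^2)''$ integrates to an $O(1)$ boundary contribution after two integrations by parts, while the main term is governed by $(C+C'')^2\ge 0$. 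Rescaling $r=s-t$ and using dominated convergence yields
\begin{align*}
T^{-1}\Var(N_2)\xrightarrow[T\to\infty]{}\frac{1}{2\pi^2}\Bigl(\int\varphi^2\Bigr)\int_{\mathbb R}(C+C'')^2 .
\end{align*}

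For \textbf{(i)}, non-degeneracy means $C$ is not a pure cosine, i.e. $C''\ne -C$, hence $C+C''\not\equiv 0$ and $\int(C+C'')^2\in(0,\infty]$; since $\int\varphi^2>0$, the limit above lies in $(0,\infty]$ and $V_{\mathsf X}(\varphi_T)\ge \Var(N_2)$ gives $\liminf T^{-1}V_{\mathsf X}(\varphi_T)>0$. If Geman's condition fails but $\varphi\ge 0$, the count on any interval has infinite second factorial moment (Geman) while its mean is finite; since $\varphi_T\ge\varepsilon$ on some scaled interval, $\E[N^2]=\infty$ and $V_{\mathsf X}(\varphi_T)=\infty$, so the $\liminf$ is $\infty\in(0,\infty]$ — this is precisely where the alternative hypothesis is spent. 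For \textbf{(ii)}, if $C+C''\notin L^2$ the limit is $+\infty$, forcing $\liminf T^{-1}V=\infty$; contrapositively, linear fluctuations require $C+C''\in L^2$. On the spectral side $C+C''$ corresponds to the measure $(1-x^2)\mu_{\mathsf X}(dx)$, whose weight vanishes only at $\pm1$, so when $\mu_{\mathsf X}$ is regular at $\pm1$ this condition is equivalent, by a Plancherel computation, to \eqref{eq:C2-L2}.

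For \textbf{(iii)}, an atom of $\mu_{\mathsf X}$ at $\pm\lambda$ with $\lambda\ne\pm1$ contributes to $C+C''$ the non-vanishing almost periodic term $2a(1-\lambda^2)\cos(\lambda\,\cdot)$, so by Besicovitch--Parseval the mean square $M=\lim_{R}(2R)^{-1}\int_{-R}^R(C+C'')^2$ satisfies $M\ge \tfrac12\,|2a(1-\lambda^2)|^2>0$; rescaling as before (the oscillations now average against $\Phi$ to the mean $M$) gives $T^{-2}\Var(N_2)\to \tfrac{1}{2\pi^2}M(\int\varphi)^2>0$, where the hypothesis $\int\varphi\ne 0$ is essential, whence $\inf_{T>0}T^{-2}V_{\mathsf X}(\varphi_T)>0$. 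The main obstacle is the rigorous construction and identification of $N_2$: justifying the Kac--Rice/Hermite expansion of the singular integrand $\delta_0(\mathsf X(t))|\mathsf X'(t)|$, interchanging the chaos projection with the $dt$-integration, and securing the finiteness that underlies $\Var(N)\ge\Var(N_2)$ — exactly the step where Geman's condition and the Kratz--L\'eon formalism enter. The remaining inputs (the $O(1)$ control of the $(C^2)''$ term, the dominated and Fej\'er-type limits, and the almost-periodic mean in (iii)) are comparatively routine.
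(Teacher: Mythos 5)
Your first half coincides with the paper's: lower-bound the variance by the second chaos of the Kratz--L\'eon expansion and arrive at the kernel $C^{2}+C''^{2}-2C'^{2}$ integrated against $\varphi_T\otimes\varphi_T$. Where you diverge is in extracting positivity: you stay in physical space via the (correct) identity $C^{2}+C''^{2}-2C'^{2}=(C+C'')^{2}-(C^{2})''$, whereas the paper passes to the spectral side, where the same kernel becomes $(1+xy)^{2}\,\hat\varphi_T(x+y)^{2}\,\mu(dx)\mu(dy)$ with a \emph{manifestly non-negative} integrand (note $(1+xy)^{2}=(1-x^{2})(1-y^{2})+(x+y)^{2}$ is exactly your identity in Fourier variables). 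This difference is not cosmetic: writing $\Var(N_2)=c\int g(r)\,\Phi_T(r)\,dr$ with $\Phi_T=\varphi_T\star\tilde\varphi_T$, your argument needs $\liminf_T\int (C+C'')^{2}(r)\,\Phi_T(r)\,dr/T>0$ (indeed $=+\infty$) when $C+C''\notin L^{2}$. Dominated convergence is unavailable there, and Fatou requires $\Phi\geqslant 0$, which fails for sign-changing $\varphi\in\mathcal{C}_b$; the negative part of $\Phi_T$ contributes at the same order $O(T)$ as the claimed main term (on the spectral side this is the region $|x|<1<|y|$ where $(1-x^{2})(1-y^{2})<0$, which is not suppressed by $\hat\varphi_T(x+y)^{2}$), so it cannot be discarded. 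Thus parts (i) and (ii) are only established by your argument for $\varphi\geqslant 0$ or when $C+C''\in L^{2}$; the general case is precisely what the paper's Fej\'er-kernel computation $\Var\geqslant c\,T\int\sinc^{2}(t/T)\,C_\varepsilon(2t)^{2}dt$ (with $\mu_\varepsilon=\mu\mathbf{1}_A$ supported away from $\pm1$) is designed to handle.

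Two further points. First, the Kratz--L\'eon decomposition you invoke is proved under the assumption $C(\tau)\neq\pm1$ for $\tau\neq 0$; the paper treats the complementary (a.s.\ periodic) case separately by an elementary argument on the random number $Q$ of zeros per period, showing $T^{-2}V\to\Var(Q)(2\tau)^{-2}(\int\varphi)^{2}$. Your proof of (iii) silently assumes the chaos formula in a regime (purely atomic $\mu$) where it may not be available, so this case must be split off as in the paper. Second, in (ii) the asserted equivalence of $C+C''\in L^{2}$ with \eqref{eq:C2-L2} under regularity at $\pm1$ requires first knowing that linear variance forces $\mu$ to have an $L^{2}$ density off $\{-1,1\}$ (otherwise ``$(1-x^{2})\mu$ has $L^{2}$ density'' is not yet a Plancherel statement); the paper obtains this from $C_\varepsilon\in L^{2}$ in the bound above, and also shows that if $\mu$ has a singular part away from $\pm1$ then \emph{neither} condition holds. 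Your remaining steps --- the $O(1)$ control of the $(C^{2})''$ term (valid since $\Phi''$ is a finite measure for piecewise continuous $\varphi$), the Wiener/Besicovitch mean argument in (iii) (which does work, since $g(T\cdot)$ converges weakly to its mean against the bounded compactly supported $\Phi$ regardless of sign), and the infinite-variance dispatch of the non-Geman case --- are sound.
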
The proof is  at Section \ref{sec:chaotic}.
We also give   an example of a singular nonatomic spectral measure $\mu _{\mathsf X}$ for which  $T^{2-\varepsilon }=o(V_{\mathsf X}(T))$ for $\varepsilon >0.$ Regarding the quadratic variance, as soon as the variance is finite on some interval with non-empty interior (Geman's condition \eqref{eq:geman}), then the variance cannot be more than quadratic, see \cite{AW}.

{\bf Related works.}
Even in the case $V_{\mathsf  X}(\varphi _{T})=V_{\mathsf  X}(T)$, it is not clear which of these results already appeared   in full generality, and in particular when $\mu $ does not have a density. We could not locate a  necessary condition other than \eqref{eq:C2-L2} for linear variance in the literature, see also Remark 1.2 in \cite{Feldheim18}.  It remains open to find a necessary and sufficient condition. In many practical studies of Gaussian zeros, the covariance is assumed to be square integrable (\cite{AL20,AW,Cuz,KL01,Slud91}), it is in particular square integrable in the neighbourhood of $\pm 1$, hence the linearity is equivalent to $C,C''\in L^{2}$, and to $C+C''\in L^{2}.$

For positive asymptotic variance to hold ((i) with $\varphi =\mathbf{1}_{ [-1,1] } $), we can notice that Cuzick \cite{Cuz} needs
\begin{align*}
\int_{}\frac{(C'(t))^{2}}{1-C(t)^{2}}dt<\frac{\pi }{2}\sqrt{\int_{}x^{2}\mu (dx)}
\end{align*}
for $\lim_{T\to \infty }T^{-1}V_{\mathsf X}(T)$ to be strictly positive, a requirement still present in \cite{Kratz-survey}. Other works aimed at proving central limit theorems with linear variance give various types of conditions:  $C\in L^{2}$ in \cite{Slud91}, $C\in L^{1},C^{''}\in L^{2} $  in \cite{KL01}.

Linear statistics are classic functionals of interest for point processes, see for instance   \cite{NS11} which focuses on complex Gaussian zeros, and references therein, as well as the variance computed in \cite{GP17} aimed at proving rigidity results. For linear statistics of real Gaussian zeros, we were notified while revising the current work that Ancona and Letendre \cite{AL20} obtained 
asymptotic results under some integrability hypotheses on the reduced covariance function. The condition $\int_{\mathbb{R}}\varphi (x)dx\neq 0$ does not appear in their results, but it is necessary in the current general framework, and in particular in the periodic case (point (iii)).\\
 
The assumption that $\mathsf  X$ is not degenerate cannot be removed, for if $C_{\mathsf  X}(t)=\cos(t)$, the variance is bounded (see the proof of Theorem \ref{thm:main-variance}). The presence of a singularity at $\pm 1$ muddies the waters anyway, for instance if $C_{\mathsf  X}(t)=\frac{\cos(t)+  \varepsilon e^{-t^{2}/2}}{1+\varepsilon }$ for some $\varepsilon >0$, the variance is at least linear.\\

{\bf Comparison with zeros of GAFs.}
It is interesting to compare the results above to those obtained by Feldheim \cite{Feldheim18}, who studied the zero set $\mathsf  Z _{\mathsf X}^{[a,b]}$ of a Gaussian analytic function (GAF) $\mathsf X$ on the infinite strip 
\begin{align*}
\mathbb  S_{a,b}=\{x+\imath y:x\in \mathbb{R},a\leqslant y\leqslant b\},
\end{align*} 
for some $-\infty <a<b<\infty $, and those of Buckley and Feldheim \cite{BF18} who study the winding number  $W_{T}$ of a GAF $\mathsf X:[0,T]\mapsto \mathbb  C$.   A GAF $\mathsf X$ is also characterised by its reduced covariance $C_{\mathsf X}$ and its spectral measure $\rho_{\mathsf X} $ along the real line:
\begin{align*}
C_{\mathsf X}(t-s)=\E (\mathsf X(t)\overline{\mathsf X}(s))=\int_{\mathbb{R}}e^{-2\pi i(t-s)x}\rho _{\mathsf X} (dx),t,s\in \mathbb{R}.
\end{align*}
Let us reproduce some of their results regarding the variances $$V_{\mathsf X}^{[a,b]}(T)=\Var(\# \mathsf  Z _{\mathsf X}^{[a,b]}\cap ([-T,T]\times [a,b])),\hspace{1cm}V'_{\mathsf X}(T)=\Var (W_{T}).$$ The field $\mathsf X$ is called {\it degenerate} if $\rho _{\mathsf X } $ consists of a single atom.

\begin{theorem}[\cite{Feldheim18,BF18}]
\label{thm:var-GAF}
Let $\mathsf X$ be a non-degenerate GAF.
\begin{itemize}\item [(i)] If the differentiability condition (3) in \cite{BF18} holds, for $T>0$
\begin{align*}
T^{-1}V'_{\mathsf X}(T)\geqslant C \text{\rm{ for some }}C>0.\\
\end{align*}
The left hand member has a limit in $(0,\infty )$ as $T\to \infty $ if $C_{\mathsf X},C_{\mathsf X}'\in L^{2}$.
\item[(ii)]The limit
\begin{align*}
 \lim_{T\to \infty }T^{-1}V_{\mathsf X}^{[a,b]}(T)
\end{align*}
 exists and is strictly positive. It is finite if  
\begin{align*}
\int_{} | C_{\mathsf X}(t+iy) | ^{2}dt<\infty ,\int_{} | C_{\mathsf X}''(t+iy) | ^{2}dt<\infty  ,y\in \{a,b\}
\end{align*}
where $C_{\mathsf X}$ is analytically continued to $\mathbb  S_{a,b}$,
\item [(iii)] the limits
\begin{align*}
\lim_{T\to \infty }T^{-2}V_{\mathsf X}^{[a,b]}(T),\;\;\;\lim_{T\to \infty }T^{-2}V'_{\mathsf X} (T) 
\end{align*}exist  and are finite, and are strictly positive iff $\rho _{\mathsf X} $ has an atom.

\end{itemize}
\end{theorem}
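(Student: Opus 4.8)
The plan is to exploit the complex-analytic representation that makes these two functionals, unlike the real zero count, directly amenable to a Gaussian computation. By the argument principle, the number of zeros of $\mathsf X$ in the rectangle $[-T,T]\times[a,b]$ is $\tfrac{1}{2\pi}$ times the total variation of $\arg\mathsf X$ along the boundary; the two vertical sides have fixed height $b-a$ and contribute a winding bounded in $T$, so that
\begin{align*}
\#\big(\mathsf Z_{\mathsf X}^{[a,b]}\cap([-T,T]\times[a,b])\big)=W_T^{(a)}-W_T^{(b)}+O(1),\qquad W_T^{(y)}=\tfrac{1}{2\pi}\int_{-T}^{T}\partial_t\arg\mathsf X(t+iy)\,dt.
\end{align*}
Both $V_{\mathsf X}^{[a,b]}(T)$ and $V'_{\mathsf X}(T)$ thus reduce, up to negligible boundary corrections, to the variance of a time integral $\int_0^T Y(t)\,dt$ of the \emph{stationary winding density} $Y(t)=\tfrac{1}{2\pi}\partial_t\arg\mathsf X(t)=\tfrac{1}{2\pi}\mathrm{Im}\big(\mathsf X'(t)/\mathsf X(t)\big)$ along the relevant horizontal line. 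This is the structural advantage over the real case: there is no chaos expansion to control, only a stationary one-parameter integral.

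Second, I would compute $\Cov(Y(s),Y(t))=:r_Y(t-s)$ explicitly. Since $\mathsf X$ is complex Gaussian, the vector $(\mathsf X(s),\mathsf X'(s),\mathsf X(t),\mathsf X'(t))$ is an explicit centred Gaussian built from $C_{\mathsf X}(t-s)$ and its first two derivatives; integrating $\mathrm{Im}(\cdot/\cdot)\otimes\mathrm{Im}(\cdot/\cdot)$ against its law produces a closed form for $r_Y$ — this is the content of formula (6) in \cite{BF18} — and the differentiability condition (3) there is exactly what guarantees $r_Y(0)=\Var(Y(0))<\infty$. With $r_Y$ in hand one has the standard identity
\begin{align*}
\Var\Big(\int_0^T Y\Big)=\int_0^T\!\!\int_0^T r_Y(t-s)\,ds\,dt=T\int_{-T}^{T}\Big(1-\tfrac{|u|}{T}\Big)r_Y(u)\,du,
\end{align*}
so the entire theorem becomes an asymptotic analysis of this one integral, governed by the spectral measure $\nu_Y$ of $Y$ (with $r_Y=\widehat{\nu_Y}$).

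Third, each regime falls out of a property of $\nu_Y$ near frequency zero, made transparent by writing the variance as a Fej\'er-kernel average,
\begin{align*}
T^{-1}\Var\Big(\int_0^T Y\Big)=\int_{\mathbb{R}}F_T(\lambda)\,\nu_Y(d\lambda),\qquad F_T(\lambda)=\frac{1}{T}\Big(\frac{\sin(T\lambda/2)}{\lambda/2}\Big)^2,
\end{align*}
with $F_T$ an approximate identity concentrating at $\lambda=0$. This integrand is manifestly non-negative, so for (i) the linear lower bound is precisely the statement that $\nu_Y$ charges every neighbourhood of $0$ with positive density: a vanishing density at zero frequency would mean fully suppressed low-frequency fluctuations of the winding, which one rules out from the explicit $r_Y$ using non-degeneracy of $\mathsf X$; when moreover $C_{\mathsf X},C_{\mathsf X}'\in L^2$ one shows $r_Y\in L^1$, whence $T^{-1}\Var\to 2\pi\,(d\nu_Y/d\lambda)(0)\in(0,\infty)$. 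Part (ii) follows from the reduction above, positivity surviving the difference $W_T^{(a)}-W_T^{(b)}$ because a non-degenerate field cannot have perfectly correlated winding densities on two distinct heights, and the $L^2$ continuation hypotheses giving the required integrability of the corresponding lag covariance. For (iii), an atom of $\rho_{\mathsf X}$ is exactly what makes $Y$ non-ergodic: the quasi-periodic component with random complex coefficients forces the time average $\tfrac1T\int_0^T Y\to\bar Y$ to a nondegenerate random variable, equivalently an atom $\nu_Y(\{0\})>0$; then $F_T(0)\nu_Y(\{0\})=T\,\nu_Y(\{0\})$ yields $\lim_T T^{-2}\Var(\int_0^T Y)=\Var(\bar Y)>0$, while absent atoms $\bar Y$ is a.s. constant and the quadratic limit vanishes. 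The same dichotomy transfers to the strip count through the winding reduction.

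The main obstacle is the explicit covariance computation together with its behaviour at the two ends: the winding density $\mathrm{Im}(\mathsf X'/\mathsf X)$ is singular where $\mathsf X$ is small, so $r_Y$ must be shown both integrable at infinity — needing quantitative decay of $C_{\mathsf X},C_{\mathsf X}'$ — and well-behaved as $u\to0$, which is precisely where the differentiability condition (3) of \cite{BF18} is used. The other genuinely non-formal point is upgrading non-negativity of $F_T\ge 0$ to \emph{strict} positivity of the spectral density at $0$ from mere non-degeneracy, since this is exactly what forbids hyperuniformity of the winding number and of the strip zeros.
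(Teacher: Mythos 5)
The paper itself contains no proof of Theorem \ref{thm:var-GAF} to compare against: it is quoted as an imported result of \cite{Feldheim18,BF18} (the author even remarks that the explicit expression (6) of \cite{BF18} has no analogue for real zeros). Measured against the original arguments, your structural reduction is the right one and is essentially theirs: the argument principle turns the strip count into a difference of winding numbers along the horizontal edges plus an $O(1)$ boundary term, and everything is then an analysis of the integrated winding, governed at low frequency by the spectral data; the quadratic regime in (iii) does correspond to an atom, i.e.\ a non-ergodic quasi-periodic component.

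However, there is a concrete false step at the base of your formalism. The winding density $Y(t)=\frac{1}{2\pi}\mathrm{Im}(\mathsf X'(t)/\mathsf X(t))$ is \emph{not} an $L^{2}$ stationary process: conditionally on $\mathsf X(t)=z$, $\mathrm{Im}(\mathsf X'(t)/\mathsf X(t))$ is Gaussian with variance of order $|z|^{-2}$, and since $|\mathsf X(t)|^{2}$ is exponentially distributed one has $\E\,|\mathsf X(t)|^{-2}=\infty$, hence $\Var(Y(0))=\infty$ regardless of any smoothness hypothesis. Your claim that condition (3) of \cite{BF18} ``is exactly what guarantees $r_Y(0)=\Var(Y(0))<\infty$'' is therefore wrong: that condition ensures a.s.\ $\mathcal{C}^{1}$ regularity so that $W_T$ is well defined, while $r_Y$ is finite only off the diagonal, with an integrable (logarithmic) singularity at $u=0$. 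Consequently $r_Y$ is not the covariance of an $L^{2}$ process, there is no spectral measure $\nu_Y$ with $r_Y=\widehat{\nu_Y}$ in the classical sense, and the Fej\'er-kernel identity you build the whole proof on cannot be invoked as stated; one must work directly with the integrated variance formula, which is precisely what formula (6) of \cite{BF18} provides. Beyond this, the two statements carrying the actual content of the theorem are asserted rather than proved: (a) strict positivity of the low-frequency mass from mere non-degeneracy (your (i), and its survival in the difference $W_T^{(a)}-W_T^{(b)}$ for (ii)) — you yourself flag it as ``non-formal'', but it is the crux, and moreover (i) is a non-asymptotic bound valid for every $T>0$, which a purely asymptotic approximate-identity argument would not deliver; and (b) the equivalence between an atom of $\rho_{\mathsf X}$ and positive mass at zero frequency, together with the \emph{existence} of the quadratic limits in (iii) also in the atomless case. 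So the route is the correct one, but as written the proposal rests on a false lemma and defers exactly the steps that constitute the theorem.
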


Hence the results are similar to ours regarding the quadratic variance and the linear variance. Feldheim \cite{Feldheim18} also gives a lower bound which yields a sufficient condition for asymptotic super linear variance, but leaves linearity undecided for some measures with a singularity, such as for $\rho_{\mathsf X} (dx)=\mathbf{1}_{\{ | x | \leqslant 1\}} | x | ^{-1/2}dt,x\in \mathbb{R}$. This is analoguous to our ``dark spot'' regarding linear variance, at the difference that we allow singularities as long as they are not located at $\pm 1$ (under the convention $C(0)=-C''(0)=1.$)

\begin{remark}A degenerate GAF is, up to rescaling and multiplication,  $\mathsf X(t)=\gamma e^{2i\pi t}$, where $\gamma =\alpha +\imath \beta $ is a Gaussian standard complex variable (i.e. $\alpha ,\beta $ are i.i.d real Gaussian variables), whereas a degenerate real Gaussian process in the sense of this paper is of the form $\mathsf X_{0}(t)=\alpha \cos(t)+\beta \sin(t)$, up to rescaling and multiplication, and its reduced covariance function is $C_{0}(t)=\cos(t)$.

It is on the other hand difficult to infer results for zeros of Gaussian processes on $\mathbb{R}$ from results about zeros of GAF because a GAF will typically take non-real values on the real line, and hence have no real zeros (this is why the winding number might be seen as a generalisation of the number of zeros). For instance, the unique (up to rescaling/multiplication) GAF $\mathsf  X$ such that for $t \in \mathbb{R}$, $\mathbb{E}(\mathsf  X(0) \overline{{\mathsf  X(t)}})=2\cos( t)$ is $\mathsf  X=\mathsf  X_{0}+i \mathsf  X_{0}'$ where $\mathsf  X_{0}'$ is an independent copy of $\mathsf  X_{0}.$
\end{remark}

\subsection{Completely predictable real zeros}

We will actually consider a property stronger than maximal rigidity.

\begin{definition}Say that a point process $\mathsf  Z $ is {\it completely predictable} if 
\begin{align*}
\sigma (\mathsf  Z )=\cap _{t\in \mathbb{R}}\sigma (\mathsf  Z \cap (-\infty ,t)).
\end{align*}
\end{definition}

This property yields that the knowledge of $\mathsf  Z $ on any half line determines the whole process. We have the obvious fact that if $\mathsf  Z $ is completely predictable, it is maximally rigid.

The problem of wether the data of a stationary  Gaussian process on a half line $\{\mathsf X(t);t\leqslant 0\}$ determines the whole process has been widely investigated, see Szeg\"o's alternative (\cite[Section 4.2]{DMcK}). Such a behaviour does not  imply that the knowledge of the zeros on a half line determines all the zeros. One might also try to reconstruct the whole process from the zeros, but Weierstrass factorisation theorem yields uncountably many functions with a given infinite set of zeros (without accumulation point). It would still be interesting to try to characterise processes whose zero set is completely predictable, or exhibit a weaker form of rigidity.
We give here a simple condition on $C_{\mathsf X}$ for the predictability  of $\mathsf  Z _{\mathsf X}.$
\begin{proposition}Assume $\mathsf X$ is almost surely of class $\mathcal{C}^{1}$, and there is a sequence $t_{n}\to \infty $ such that $ | C_{\mathsf X}(t_{n}) | \to 1$. Then  $\mathsf  Z _{\mathsf X}$ is predicable.
\end{proposition}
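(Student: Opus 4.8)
The strategy is to read the hypothesis $|C_{\mathsf X}(t_n)|\to 1$ as a \emph{rigidity} of the shift action on the Gaussian space: it forces the process observed far to the left, near $-t_n$, to coincide in the limit (up to a global sign) with $\mathsf X$ near the origin, so that a bounded window of zeros can be reconstructed from zeros arbitrarily far to the left. Normalize $C_{\mathsf X}(0)=1$. By the definition of complete predictability it suffices to prove, for each fixed $R>0$, that $\mathsf Z_{\mathsf X}\cap(-R,R)$ is measurable with respect to the tail $\sigma$-algebra $\mathcal T:=\cap_{a\in\mathbb{R}}\sigma(\mathsf Z_{\mathsf X}\cap(-\infty,a))$; the reverse inclusion $\mathcal T\subseteq\sigma(\mathsf Z_{\mathsf X})$ is automatic, and letting $R\to\infty$ then gives $\sigma(\mathsf Z_{\mathsf X})=\mathcal T$. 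I record two facts that follow from $\mathsf X$ being a.s.\ $\mathcal C^1$ and non-degenerate: first $-C_{\mathsf X}''(0)\in(0,\infty)$ and $\mathsf X'(u)$ is the mean-square derivative, so $\mathsf X'(u)\in\mathcal H_1$ (the first Wiener chaos); second, a.s.\ every zero of $\mathsf X$ in $[-R,R]$ is simple, since $(\mathsf X(u),\mathsf X'(u))$ has a non-degenerate Gaussian density and a Bulinskaya-type argument rules out $\mathsf X(u)=\mathsf X'(u)=0$.

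The rigidity step is the following. Passing to a subsequence, assume $C_{\mathsf X}(t_n)\to\epsilon\in\{-1,1\}$. Let $S_r$ denote the shift unitary on $\mathcal H_1$ determined by $S_r\mathsf X(u)=\mathsf X(u+r)$. Then $\langle S_{-t_n}\mathsf X(0),\mathsf X(0)\rangle=C_{\mathsf X}(t_n)\to\epsilon$, and unitarity gives
\begin{align*}
\|S_{-t_n}\mathsf X(0)-\epsilon\,\mathsf X(0)\|^2=2-2\epsilon\, C_{\mathsf X}(t_n)\longrightarrow 0.
\end{align*}
Since $S_{-t_n}$ commutes with each $S_s$, since $\{\mathsf X(s):s\in\mathbb{R}\}$ is total in $\mathcal H_1$, and since $\|S_{-t_n}\|=1$, this upgrades to strong convergence $S_{-t_n}\to\epsilon\,I$ on all of $\mathcal H_1$. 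Writing $\mathsf X_n(u):=\mathsf X(u-t_n)$, and using $\mathsf X'(u)\in\mathcal H_1$, we conclude that both $\mathsf X_n(u)-\epsilon\mathsf X(u)$ and $\mathsf X_n'(u)-\epsilon\mathsf X'(u)$ tend to $0$ in $L^2(\mathbf P)$; by stationarity their variances $v_n$ and $w_n$ tend to $0$ uniformly in $u$.

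The main obstacle is to promote this pointwise $L^2$-convergence to almost sure uniform convergence in $\mathcal C^1([-R,R])$ without assuming $\mathsf X\in\mathcal C^2$. Set $D_n:=\mathsf X_n-\epsilon\mathsf X$, a centered a.s.\ $\mathcal C^1$ Gaussian process on $[-R,R]$ with $\mathbf E\,D_n(u)^2=v_n$ and $\mathbf E\,D_n'(u)^2=w_n$ independent of $u$. The key observation is that, by stationarity, the modulus of continuity of $D_n$ (resp.\ $D_n'$) is dominated in distribution by twice that of $\mathsf X$ (resp.\ $\mathsf X'$) on an interval of length $2R$, so the families $\{D_n\}$ and $\{D_n'\}$ are equicontinuous in probability \emph{uniformly in $n$}; this uses only that $\mathsf X$ and $\mathsf X'$ are a.s.\ uniformly continuous on compacts. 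Combining uniform equicontinuity in probability with pointwise convergence on a countable dense set gives $\sup_{[-R,R]}(|D_n|+|D_n'|)\to 0$ in probability, and a further subsequence yields $\mathsf X_n\to\epsilon\mathsf X$ in $\mathcal C^1([-R,R])$ almost surely. (Alternatively one may bound $\mathbf E\sup|D_n|$ and $\mathbf E\sup|D_n'|$ by a majorizing-measure argument, but the equicontinuity route avoids invoking finiteness of an entropy integral.)

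It remains to transfer this to the zero sets. On the a.s.\ event that $\epsilon\mathsf X$ (equivalently $\mathsf X$) has only simple zeros in $[-R,R]$ and none at $\pm R$, $\mathcal C^1$-convergence $\mathsf X_n\to\epsilon\mathsf X$ forces the zeros of $\mathsf X_n$ in $(-R,R)$ to be in bijection with, and to converge to, those of $\mathsf X$: uniform value convergence localizes and produces the zeros, while uniform derivative convergence keeps $\mathsf X_n$ strictly monotone near each simple zero, preventing spurious extra crossings. Hence the counting measure of $\mathsf Z_n\cap(-R,R)$ converges to that of $\mathsf Z_{\mathsf X}\cap(-R,R)$, where $\mathsf Z_n=\{y:\mathsf X_n(y)=0\}=\mathsf Z_{\mathsf X}+t_n$. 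Finally, $\mathsf Z_n\cap(-R,R)$ is a deterministic shift by $t_n$ of $\mathsf Z_{\mathsf X}\cap(-R-t_n,R-t_n)$; for any fixed $a$, as soon as $R-t_n<a$ this window lies in $(-\infty,a)$, so $\mathsf Z_n\cap(-R,R)$ is $\sigma(\mathsf Z_{\mathsf X}\cap(-\infty,a))$-measurable. Its almost sure limit $\mathsf Z_{\mathsf X}\cap(-R,R)$ is therefore $\sigma(\mathsf Z_{\mathsf X}\cap(-\infty,a))$-measurable for every $a$, hence $\mathcal T$-measurable up to null sets. Since $R$ was arbitrary, $\sigma(\mathsf Z_{\mathsf X})\subseteq\mathcal T$, which gives complete predictability and, in particular, maximal rigidity.
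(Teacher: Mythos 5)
Your proof is correct, and it follows the same overall strategy as the paper's: recognize that $|C_{\mathsf X}(t_n)|\to 1$ forces the shifted process to converge to $\pm\mathsf X$, use the a.s.\ simplicity of the zeros to transfer this convergence to the zero sets, and conclude that a bounded window of $\mathsf Z_{\mathsf X}$ is measurable with respect to the configuration in a distant half-line. Where you genuinely differ is in the analytic core. The paper never establishes uniform convergence of the shifted processes: it characterizes the event $\{\mathsf Z_{\mathsf X}\cap \dot I\neq\emptyset\}$ through sign changes of $\mathsf X$ at rational points (which pass to $\mathsf X_n$ by a.s.\ pointwise convergence), and for the converse implication it only extracts, via Fatou's lemma, a subsequence along which $\|\mathsf X_n'\|_I$ stays bounded, then runs a compactness argument on a sequence of zeros of $\mathsf X_n$. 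You instead prove genuine $\mathcal C^1([-R,R])$ convergence in probability, combining the strong operator convergence $S_{-t_n}\to\epsilon I$ on the first chaos (a clean device that yields the derivative convergence for free) with a stationarity-based uniform equicontinuity bound on $D_n$ and $D_n'$; both steps are sound. Your route is somewhat heavier but buys more: you obtain convergence of the full restricted counting measures $\mathsf Z_n\cap(-R,R)$, not merely of the avoidance events $\{\mathsf Z\cap I\neq\emptyset\}$, so you bypass the (implicit) monotone-class step needed to pass from avoidance events to the generated $\sigma$-algebra. Two cosmetic remarks: the non-degeneracy of $(\mathsf X(0),\mathsf X'(0))$ that you invoke for simplicity of zeros holds automatically except for a.s.\ constant processes (the paper makes the same implicit assumption), and both arguments conclude measurability only up to null sets, which is the standard convention here.
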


\begin{proof}
Let us denote the shifted process by $\mathsf X_{n}=\mathsf X(t_{n}+\cdot )$. We have for $t\in \mathbb{R}$, $\pm \Cov (\mathsf X (t),\mathsf X_{n}(t))\to  1$, hence $\pm \mathsf X_{n}(t)\to \mathsf X(t)$ since these are Gaussian variables. 

Let now $I$ be a bounded interval of $\mathbb{R}.$ Denote by $\dot I$ the interior of $I$ and by $\bar I$ its closure.
Since $\mathsf  Z _{\mathsf X },\mathsf  Z _{\mathsf X_{n}}$ are stationary point processes, with probability $1$, they don't have zeros on $\partial I=\bar I\setminus \dot I$.  Hence a.s., for simultaneously all $n$, $$(\mathsf  Z _{\mathsf X_{n}}\cap \bar I\neq \emptyset )=(\mathsf  Z _{\mathsf X_{n}}\cap \dot I\neq \emptyset );\;(\mathsf  Z _{\mathsf X }\cap \bar I\neq \emptyset )=(\mathsf  Z _{\mathsf X }\cap \dot I\neq \emptyset ).$$

Since $(\mathsf X(0),\mathsf X'(0))$ is not degenerate,   the zeros of $\mathsf X$ are almost all regular, i.e. $\mathsf X$ and $\mathsf X'$ don't vanish simultaneously (see \cite[Th. 1.1]{Kratz-survey}). Hence for each zero $z$ of $\mathsf X$ we have $\mathsf X(t)\mathsf X(s)<0$ for some $s,t$ arbitrarily close from $z.$

Then we have  almost surely
\begin{align*}
\mathsf  Z _{\mathsf X}\cap \dot I\neq \emptyset \Rightarrow&  {\mathsf X}(t) {\mathsf X}(s)<0\text{\rm{ for some }}t,s\in \dot I\cap \mathbb  Q\\
 \Rightarrow &\exists t,s\in \dot I\cap \mathbb  Q: {\mathsf X_{n}}(s){\mathsf X_{n}}(t)<0 \text{\rm{ for $n$ sufficiently large}}
\end{align*}
because we have a.s. the simultaneous convergences $\mathsf X_{n}(t)\to \mathsf X(t),t\in \mathbb  Q$, hence we can conlude that if $ {\mathsf X}$ vanishes in $\dot I$, so does $ {\mathsf X_{n}}$ for $n$ sufficiently large.

Let us prove the converse statement. We have by Fatou's lemma, and using the stationarity
again,
\begin{align*}
\E (\liminf_{n}\|\mathsf X_{n}'\|_{I})\leqslant \liminf_{n}\E (\|\mathsf X_{n}'\|_{I})=\E (\|\mathsf X'\|_{I})
\end{align*}
and the last quantity is finite  since $\mathsf X'$ is a.s. bounded on $I$ (see \cite{AW}).
Hence
\begin{align*}
\P (\liminf_{n} | \mathsf X_{n}' | =\infty )=0
\end{align*}
and there is a.s. $L<\infty $ and $N\subset \mathbb{N}$ infinite such that $ \|\mathsf X_{n}'\|_{I}\leqslant L $ for $n\in N.$

So let us assume that $ {\mathsf X_{n}}(t_{n})=0$ with $t_{n}\in \dot I$ for $n$ sufficiently large. 
 Then it also vanishes for $n\in N$ sufficiently large, hence there is a subsequence $n'\in N$ such that $\mathsf X_{n'}(t_{n'})=0$.  Let us take another subsequence $n''$ such that $t_{n''}\to t\in \bar I$. We have 
\begin{align*}
 | \mathsf X(t) | \leqslant  | \mathsf X(t)-\mathsf X_{n''}(t) | + | \mathsf X_{n''}(t)-\mathsf X_{n''}(t_{n''}) | \leqslant  | \mathsf X(t)-\mathsf X_{n''}(t) | +L | t-t_{n''} | ,
\end{align*}
hence the latter a.s. converges to $0$, and $\mathsf X$ vanishes in $t\in \bar I$. since $\mathsf X$ does not vanish on $\partial I$ a.s., we have proved the a.s. equivalence, 
\begin{align*}
\mathsf  Z _{\mathsf X}\cap I\neq \emptyset \Leftrightarrow \mathsf  Z _{\mathsf X_{n}}\cap I\neq \emptyset \text{\rm{ for $n$ sufficiently large.}}
\end{align*}
In particular, $\mathsf  Z _{\mathsf X}\cap I\neq \emptyset $ is completely determined by $$\bigcap _{n}\sigma (\mathsf  Z _{\mathsf X_{n}}\cap I)=\bigcap _{n}\sigma ((\mathsf  Z _{\mathsf X}+t_{n})\cap I)=\bigcap _{n}\sigma (\mathsf  Z _{\mathsf X}\cap (I-t_{n})) \subset  \bigcap _{t} \sigma( (\mathsf  Z _{\mathsf X}\cap [t,\infty ))),$$
 which concludes the proof.
\end{proof}

Let us give simple examples of Gaussian processes that satisfy this assumption, see \cite{CG89} for the details. Say that a continuous function $f:\mathbb{R}\to \mathbb{R}$ is (Bohr)-{\it almost periodic} if for each $\varepsilon >0$ there exists $T_{0}(\varepsilon )>0$ such that every interval of length $T_{0}(\varepsilon )$ contains a number $\tau $ with the following property:
$ | f(t+\tau )-f(t) | <\varepsilon $ for each $t\in \mathbb{R}$. Bochner came up with the following characterisation: $f$ is almost periodic iff for every sequence $t_{n}\to \infty $, there is a subsequence $t_{n'}$ such that 
\begin{align*}
\|f-f(t_{n'}+\cdot )\|\to 0.
\end{align*} Such functions are equivalently characterised as uniform limits of trigonometric polynomials on the real line.

\begin{proposition}Let $a=(a_{i})$ be a summable sequence of non-negative numbers, $\varphi =(\varphi _{i})$ a sequence of real numbers, and 
\begin{align}
\label{eq:a-p-cov}
C(x)=\sum_{i}a_{i}\cos(\varphi _{i}x).
\end{align}
Then $C$ is almost periodic, and it is the reduced covariance $C=C_{\mathsf X}$ of a Gaussian stationary process $\mathsf X$ which is a.s. bounded, almost periodic, and whose zero set $\mathsf  Z _{\mathsf X}$ is completely predictable.
\end{proposition}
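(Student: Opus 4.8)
The plan is to verify the three structural claims in the order stated — that $C$ is almost periodic, that it is a genuine reduced covariance, and that the associated process has a.s. bounded, almost periodic, $\mathcal{C}^1$ sample paths — and then to feed the resulting recurrence of $\mathsf X$ into the predictability Proposition proved above. First I would check the almost periodicity of $C$. Each partial sum $C_n(x)=\sum_{i\leqslant n}a_i\cos(\varphi_i x)$ is a trigonometric polynomial, and since $|a_i\cos(\varphi_i x)|\leqslant a_i$ with $\sum_i a_i<\infty$, the Weierstrass $M$-test gives $\|C-C_n\|_\infty\to 0$; thus $C$ is a uniform limit of trigonometric polynomials, hence almost periodic by the Bochner characterisation recalled above. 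In the same breath $C$ is positive definite: each $\cos(\varphi_i\cdot)$ is the covariance of $\alpha\cos(\varphi_i\cdot)+\beta\sin(\varphi_i\cdot)$ (spectral measure $\tfrac12(\delta_{\varphi_i}+\delta_{-\varphi_i})$), a non-negative combination of positive definite functions is positive definite, and a pointwise limit of positive definite functions is positive definite. Hence $C=C_{\mathsf X}$ for a stationary Gaussian process with finite discrete spectral measure $\mu_{\mathsf X}=\sum_i\tfrac{a_i}{2}(\delta_{\varphi_i}+\delta_{-\varphi_i})$, of total mass $\sum_i a_i<\infty$; after multiplying $\mathsf X$ by a constant (which leaves $\mathsf Z_{\mathsf X}$ unchanged) I normalise $C(0)=\sum_i a_i=1$.

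Second comes the substantive analytic input. I would realise $\mathsf X$ as the random trigonometric series
\[
\mathsf X(t)=\sum_i\sqrt{a_i}\,\bigl(\alpha_i\cos(\varphi_i t)+\beta_i\sin(\varphi_i t)\bigr),
\]
with $(\alpha_i,\beta_i)$ i.i.d.\ standard Gaussian, whose covariance is exactly $C(t-s)$. Each partial sum is a trigonometric polynomial, hence almost periodic, so it suffices to show the series converges a.s.\ uniformly on $\mathbb{R}$: this then exhibits $\mathsf X$ as an a.s.\ uniform limit of trigonometric polynomials, i.e.\ with a.s.\ bounded almost periodic sample paths. \emph{This is the main obstacle.} The naive $M$-test fails, since the Gaussian coefficients $\sqrt{a_i}\,|\alpha_i|$ are unbounded and $\sum_i\sqrt{a_i}\,\E|\alpha_i|$ may diverge even when $\sum_i a_i<\infty$ (for lacunary $(\varphi_i)$, Sidon-set estimates show that $\sum_i\sqrt{a_i}<\infty$ is essentially required for boundedness). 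One must instead exploit the cancellation of the trigonometric system through entropy, or Salem--Zygmund/Marcus--Pisier type estimates on $\E\sup_t|\cdot|$, which depend on the arithmetic structure of $(\varphi_i)$; I would invoke \cite{CG89} for these sample-path properties under the operative conditions. The same circle of ideas gives, once the derivative series converges suitably (which asks at least $\sum_i a_i\varphi_i^2<\infty$, so that $C\in\mathcal{C}^2$), that $\mathsf X$ is a.s.\ of class $\mathcal{C}^1$; it is exactly this $\mathcal{C}^1$ regularity that allows me to call on the predictability Proposition.

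Third I would produce the recurrence sequence and conclude. Since $C$ is almost periodic, for every $\varepsilon>0$ its $\varepsilon$-almost periods form a relatively dense set and hence contain arbitrarily large $\tau$, for which $|C(\tau)-C(0)|\leqslant\varepsilon$; taking $\varepsilon=1/n$ gives $t_n\to\infty$ with $C(t_n)\to C(0)=1$, so $|C(t_n)|\to 1$. With $\mathsf X$ a.s.\ of class $\mathcal{C}^1$ and non-degenerate (the degenerate case $C=a_1\cos(\varphi_1 x)$ being a single periodic cosine, whose zero set is trivially predictable), the hypotheses of the predictability Proposition are met, whence $\mathsf Z_{\mathsf X}$ is completely predictable, and in particular maximally rigid. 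I expect the only delicate point to be the second step: once a.s.\ boundedness, almost periodicity and $\mathcal{C}^1$ regularity of the sample paths are secured, the remaining arguments are soft consequences of Bochner's characterisation and the already-established Proposition.
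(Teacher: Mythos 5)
Your proposal follows the same route as the paper's proof: uniform convergence of the series gives almost periodicity and positive definiteness of $C$, the process is realised as a random trigonometric series which should itself be a uniform limit of trigonometric polynomials, the almost periods of $C$ supply $t_n\to\infty$ with $C(t_n)\to C(0)=1$, and the preceding predictability proposition concludes. The rest of your argument --- Bochner's characterisation, the handling of degeneracy, the recurrence $|C(t_n)|\to 1$ --- matches the paper exactly.

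The only substantive divergence is at the step you call the ``main obstacle'', and there your caution is justified rather than excessive. The paper dispatches this step in one line by writing $\mathsf X(x)=\sum_i a_i(\alpha_i\cos(\varphi_i x)+\beta_i\sin(\varphi_i x))$ and claiming a.s.\ uniform convergence ``for the same reasons as $C$''; with these coefficients the $M$-test does apply (since $\sum_i a_i(|\alpha_i|+|\beta_i|)<\infty$ a.s.\ when $\sum_i a_i<\infty$), but the reduced covariance of that series is $\sum_i a_i^2\cos(\varphi_i\,\cdot)$, not $C$. With the correct coefficients $\sqrt{a_i}$, as you observe, the $M$-test needs $\sum_i\sqrt{a_i}<\infty$, which summability of $(a_i)$ does not give, and the obstruction is not merely technical: for $\varphi_i=i$ and $a_i=(i\log i\,(\log\log i)^2)^{-1}$ the sequence is summable, yet the Dudley--Fernique entropy integral for the canonical metric diverges and the stationary Gaussian process with covariance $C$ is a.s.\ unbounded. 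Likewise, invoking the predictability proposition requires a.s.\ $\mathcal C^1$ paths, hence at least $\sum_i a_i\varphi_i^2<\infty$, which is not among the hypotheses either. So the two reservations you flag are genuine gaps that are shared by --- indeed glossed over in --- the paper's own proof; your write-up is the more careful of the two, but since you only defer these points to \cite{CG89} ``under the operative conditions'', it does not establish the statement under the stated hypotheses any more than the paper does. Under the strengthened assumptions $\sum_i\sqrt{a_i}<\infty$ and $\sum_i\sqrt{a_i}\,|\varphi_i|<\infty$ (which give a.s.\ uniform convergence of the series and of its term-by-term derivative by exactly the $M$-test computation you describe), every step of your outline closes.
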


\begin{proof}$C$ is obviously the uniform limit of trigonometric polynomials. Then $\mathsf X$ can be represented by 
\begin{align*}
\mathsf X(x)=\sum_{i}a_{i}(\alpha _{i}\cos(\varphi _{i}x)+\beta _{i}\sin(\varphi _{i}x))
\end{align*}where the $\alpha _{i},\beta _{i}$ form a sequence of independent standard Gaussian variables, hence $\mathsf X$ is a.s. periodic for the same reasons as $C$. Finally, $\mathsf  Z _{\mathsf X}$ is completely predictable because $C(x)$ gets arbitrarily close to $1$ as $x\to \infty $, hence the previous proposition  applies.
\end{proof}

Even if such processes could be useful for modelling pseudo-crystalline arrangments, these periodicity properties prevent them from having the slightest form of asymptotic independence, ergodicity, let alone mixing properties.
We shall give in the next section examples of fields  whose zero set is weakly mixing and completely predictible. 

The excursion volume of almost periodic fields have interesting properties. It is proved in the forthcoming paper \cite{Lr20+} that  $\mathcal  V_{\mathsf X}(T)=\Var(|[-T,T]\cap \{\mathsf X>0\}|)$
 strongly depends on the diophantine properties of the $\varphi _{i}$, where $ | \cdot  | $ denotes Lebesgue measure. If for instance $C_{\mathsf X}(x)=\cos(x)+\cos(\varphi  x)$, $\mathcal  V_{\mathsf X}(T)$ can either be bounded, or grow as an arbitrarily high power of $T$, depending on weather $\varphi $ has ``good'' or ``bad'' approximations by rational numbers.

\section{Symmetric Bernoulli convolutions}
\label{sec:bernoulli}

Let $\lambda =(\lambda _{k})$ be a square summable family of $\mathbb{R}_{+}$, and 
\begin{align*}
C^{\lambda }(t)=\prod_{k=1}^{\infty }\cos(\lambda _{k}t ), t\in \mathbb{R}.
\end{align*}
This function converges for every $t$   (\cite[(3.7.9)]{Luk70}), and it is the reduced covariance of some stationary Gaussian  field denoted by $\mathsf X^{\lambda }$. The order of multiplication does not matter in the value $C^{\lambda }(t)$, hence we can assume throughout  that $\lambda _{k}\geqslant \lambda _{k+1}$. By studying the uniform convergence on every compact, it is  clear that $\mathsf X^{\lambda }$ is analytic, with an analytic continuation to $\mathbb  C$.

The spectral measure $\mu ^{\lambda }$ of $\mathsf X^{\lambda }$ is called a {\it symmetric Bernoulli convolution} because it is the  law of the random variable
\begin{align*}
Y^{\lambda }:=\sum_{k=1}^{\infty }\lambda _{k}{\varepsilon _{k}}
\end{align*}
where the $\varepsilon _{k}$ are iid Rademacher variables. Infinite products of characteristic functions have been studied by Lukacs \cite[Chapter 3.7]{Luk70}, where basic results stated here are proved. The main focus of study in the literature is the nature of   $\mu ^{\lambda }$. It is known that $\mu ^{\lambda }$ is {\it pure}, i.e. either continuous, purely discrete, or purely singular \cite[Th. 3.7.7]{Luk70}. Bernoulli convolutions with geometric progression are a very active research subject, see for instance \cite{Varju}, they are one of the most studied examples of self-similar measures, and they are objects of great interest in fractal geometry.

Regarding ergodicity, let $\{\mathsf X(t);t\in \mathbb{R}\}$ be a Gaussian stationary process with almost surely continuous paths, and $\mathscr {B}=\sigma (\mathsf X)$ the $\sigma $-algebra generated by the random variables $\mathsf X(t),t\in \mathbb{R}$. For $B\in \mathscr {B},t\in \mathbb{R}$, let $B+t$ be the event $B$ translated by $t$. The process $\mathsf X$ is called {\it weakly mixing} if 
\begin{align*}
\lim_{T\to \infty }T^{-1}\int_{0}^{T}\left|
\P (A\cap (B+t))-\P (A)\P (B)
\right|dt=0.
\end{align*}

\begin{proposition}
Assume
 $\lambda _{n}>R _{n}>0$ for every $n.$ Then  $\mu ^{\lambda }$ has no atoms, its support has Cantor type (compact, completely disconnected and without isolated points), and
\begin{align*}
\lim_{T\to \infty }\frac{1}{T}\int_{0}^{T}C^{\lambda }(t)dt=0,
\end{align*}
hence $\mathsf X^{\lambda }$ and its zero set $\mathsf  Z ^{\lambda }$ are weakly mixing.
\end{proposition}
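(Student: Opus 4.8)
The plan is to realise the spectral measure $\mu^{\lambda}$ as the image of a Bernoulli product measure under a coding map, read off every topological and atomic claim from the separation hypothesis, and then deduce weak mixing from the continuity of $\mu^{\lambda}$ via the standard spectral theory of Gaussian flows. Throughout I take $R_{n}=\sum_{k>n}\lambda_{k}$, so that the hypothesis $\lambda_{n}>R_{n}$ in particular forces $\sum_{k}\lambda_{k}<\infty$ and $R_{n}\to 0$.

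First I would introduce the coding map $\Phi:\{-1,1\}^{\mathbb{N}}\to\mathbb{R}$, $\Phi(s)=\sum_{k}\lambda_{k}s_{k}$. Since $\lambda$ is summable the series converges uniformly on the compact product space, so $\Phi$ is continuous; moreover $\Phi$ pushes the fair Rademacher product measure $\beta$ on $\{-1,1\}^{\mathbb{N}}$ forward to $\mu^{\lambda}$, because $Y^{\lambda}=\Phi((\varepsilon_{k}))$. The heart of the argument is injectivity: if $s\neq s'$ first differ at index $n$, the terms of index $<n$ cancel and
$$
| \Phi(s)-\Phi(s') | \geq 2\lambda_{n}-2\sum_{k>n}\lambda_{k}=2(\lambda_{n}-R_{n})>0 .
$$
Hence $\Phi$ is a continuous bijection from the compact Cantor space $\{-1,1\}^{\mathbb{N}}$ onto $K:=\Phi(\{-1,1\}^{\mathbb{N}})$, i.e.\ a homeomorphism, so $K$ is compact, completely disconnected and without isolated points. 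Since $\beta$ has full support, $K=\mathrm{supp}(\mu^{\lambda})$, giving the Cantor-type support. Injectivity also settles the atoms: $\mu^{\lambda}(\{x\})=\beta(\Phi^{-1}\{x\})\leq\beta(\{s\})=\prod_{k}\tfrac12=0$ for every $x$, so $\mu^{\lambda}$ is continuous.

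For the Ces\`aro average I would write $C^{\lambda}(t)=\int\cos(xt)\,\mu^{\lambda}(dx)$ and use Fubini together with $\frac1T\int_{0}^{T}\cos(xt)\,dt=\sinc(xT)$, giving
$$
\frac1T\int_{0}^{T}C^{\lambda}(t)\,dt=\int \sinc(xT)\,\mu^{\lambda}(dx).
$$
As $T\to\infty$ the integrand tends to $\mathbf{1}_{\{x=0\}}$ pointwise and is bounded by $1$, so dominated convergence yields the limit $\mu^{\lambda}(\{0\})=0$.

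Finally, for weak mixing I would invoke the classical criterion that a stationary Gaussian process is weakly mixing if and only if its spectral measure is continuous: continuity of $\mu^{\lambda}$ propagates to all convolution powers $\mu^{\lambda *n}$, $n\geq 1$ (a continuous measure convolved with anything stays continuous), hence to the maximal spectral type on every Wiener chaos, which is exactly weak mixing of $\mathsf X^{\lambda}$. The zero set $\mathsf Z^{\lambda}$ is a measurable, flow-equivariant factor of $\mathsf X^{\lambda}$, since shifting the process by $s$ sends its zero set to $\mathsf Z^{\lambda}-s$, and weak mixing is inherited by factors, so $\mathsf Z^{\lambda}$ is weakly mixing too. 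The main obstacle is precisely this last step: the stated Ces\`aro limit only detects the absence of an atom at $0$, so it is the full continuity of $\mu^{\lambda}$ obtained from the coding argument --- and its transfer through all chaoses and then to the zero-set factor --- that carries the weak mixing conclusion, whereas the Cantor structure and the Ces\`aro computation themselves are routine once the separation estimate is in place.
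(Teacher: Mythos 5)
Your proposal is correct and follows essentially the same route as the paper: the separation estimate $\lambda_n>R_n$ gives injectivity of the coding map, hence the Cantor-type support and the absence of atoms, and weak mixing then follows from the continuity of the spectral measure via the classical spectral criterion for Gaussian flows (the paper cites Maruyama and Rosi\'nski--\.Zak for this), with the zero set inheriting the property as a measurable factor. The only cosmetic differences are that you derive atomlessness directly from the pushforward of the Bernoulli product measure rather than from the paper's symmetry argument, and that you spell out the Ces\`aro limit explicitly, which the paper's proof leaves implicit.
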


\begin{proof}The support of $\mu $ is 
\begin{align*}
\Sigma =\{\sum_{k}\lambda _{k}\varepsilon _{k}: \varepsilon =(\varepsilon _{k})\in \{-1,1\}^{\mathbb{N}}\}.
\end{align*}
Hence it is clear that given $t=\sum_{k}\lambda _{k}\varepsilon _{k}\in \Sigma ,\eta  =\lambda _{n}$ for some $n\in \mathbb{N}$, 
\begin{align*}
(\Sigma \setminus \{t\})\cap [t-\eta  ,t+\eta  ]\supset \{\sum_{k=1}^{n}\lambda _{k}\varepsilon _{k}+\sum_{k>n}\lambda _{k}\varepsilon '_{k-n}:\varepsilon '\in \{-1,1\}^{\mathbb{N}}\}
\end{align*}is infinite. It follows that $t$ is an accumulation point of $\Sigma $. To prove that $\Sigma $ is completely disconnected, let $t=\sum_{k}\varepsilon _{k}\lambda _{k},s=\sum_{k}\varepsilon _{k}'\lambda _{k}\in \Sigma $ with $t<s$, and $k_{0}=\min\{k:\varepsilon _{k}\neq \varepsilon _{k}'\}$. Then let
\begin{align*}
t_{+}=\sum_{k<k_{0}}\varepsilon _{k}\lambda _{k}+\sum_{k\geqslant k_{0}}\lambda _{k},\;s_{-}=\sum_{k<k_{0}}\lambda _{k}\varepsilon _{k}-\sum_{k\geqslant k_{0}}\lambda _{k}.
\end{align*}
We have $t\leqslant t_{+}<s_{-}\leqslant s$, where the strict inequality comes from $R_{k_{0}}<\lambda _{k_{0}}$. Noticing that there is no point of $\Sigma $ in $(t_{+},s_{-})$, $t$ and $s$ are not connected through $\Sigma $.

To prove that there is no atom, symmetry considerations yield that $\mu (\{t\})=\mu (\{s\})$ for every $t,s\in \Sigma $; since $\mu $ is finite, it follows that $\mu (t)=0$ for every $t\in \Sigma .$ Hence according to Maruyama's theorem \cite{Maruyama}, $\mathsf X$ is ergodic, and according to  Zak \& Rosi\'nski \cite{RZ97}, it is hence weakly mixing.

 Since $\mathsf X$ is a.s. continuous, for $I\in \mathscr{B}(\mathbb{R})$ open, we have a.s.
\begin{align*}
\#\mathsf  Z _{\mathsf X}\cap I=\lim _{n\geqslant 1}\sum_{k=-\infty }^{\infty }\mathbf{1}_{\{\mathsf X(k/n)\mathsf X((k+1)/n)\leqslant 0\}},
\end{align*}
hence any event $A\in \sigma (\mathsf  Z _{\mathsf X})$ is $\sigma (\mathsf X)$-measurable. It readily follows that since $\mathsf X$ is weakly mixing, so is $\mathsf  Z _{\mathsf X}.$ 
\end{proof}

There are also many cases where $C^{\lambda }$ has stronger mixing properties.
Most available results concern  coefficients of the form $\lambda _{k}=a^{k} $ for some $a\in (0,1)$. The case $a=1/2,$ i.e. $\lambda _{k}=2^{-k}$, actually yields  the  {\it sine process} thanks to Vieta's formula:
\begin{align*}
 \frac{\sin(t)}{t}=&\frac{2\sin(t/2)\cos(t/2)}{t}=\frac{4\sin(t/4)\cos(t/4)\cos(t/2)}{t}\\
=&\dots \\
=&\lim_{n}\frac{2^{n}\sin(2^{-n}t)}{t}\prod_{k}\cos(2^{-k}t)=C^{\lambda }(t).
\end{align*}
This example is central, as it is the only $a$ for which $R_{n}=\lambda _{n}$, and it is the smallest $a$ for which $\mu ^{\lambda }(dx)=\mathbf{1}_{\{[-1,1]\}}(x)dx$ is continuous. Since $C^{\lambda }(t)\to 0$ as $t\to \infty $, $\mathsf X^{\lambda }$ and its zero set $\mathsf  Z ^{\lambda }$ are (strongly) mixing.  For $a<1/2$, the previous considerations yield that $\mu ^{\lambda }$ is Cantor-like with no atoms, hence weakly mixing. If $a$ is not of the form $m^{-k}$ for some $m\in \mathbb{N}$, $C^{\lambda }(t)=O( | \log(t) | ^{-\gamma })$ for some $\gamma >0$ as $t\to \infty $, whereas for $a=m^{-k},m\geqslant 3$, $\limsup_{t\to \infty }C^{\lambda }(t)>0$, see \cite{Luk70}.

There are many other examples of such products, with diverse  behaviours.
If for instance  $\lambda _{k}=k^{-1},$
\begin{align*}
C^{\lambda }(t)=\prod_{k}\cos\left(
\frac{t}{k}
\right)=\prod_{n} \sinc\left(
\frac{t}{2n+1}
\right)
\end{align*}
decreases faster than any polynomial.
Let us give now an example of symmetric Bernoulli convolution which has rigid zeros. As a further proof of asymptotic independence, we show that this process is a.s. unbounded, at the contrary of almost periodic Gaussian fields mentioned in the previous section.

\begin{proposition}
\label{prop:factorial}
Assume $\lambda _{k}=k !^{-1}\pi $. Then $(-1)^{n}C^{\lambda }(n!)\to 1$ as $n\to \infty $. In particular, the zero set $\mathsf  Z $ of the Gaussian process $\mathsf X$ with reduced covariance $C^{\lambda }$ is completely predictable, hence maximally rigid. Furthermore, $\mu _{\lambda }$ is purely singular without atoms, hence $\mathsf X$ and $\mathsf  Z $ are weakly mixing. Also, $\sup_{t\in \mathbb{R}} | \mathsf X_{\lambda }(t) | =\infty $ a.s.
\end{proposition}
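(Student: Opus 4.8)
The plan is to establish the key asymptotic $|C^{\lambda}(n!)|\to 1$ first, and then read off the remaining assertions from the propositions already proved. Write
$C^{\lambda}(n!)=\prod_{k=1}^{n}\cos(\pi\, n!/k!)\cdot\prod_{k>n}\cos(\pi\, n!/k!)$
and treat the two products separately. For $k\le n$ the ratio $n!/k!$ is an integer, so $\cos(\pi\, n!/k!)=(-1)^{n!/k!}$ and the finite product equals $(-1)^{S_n}$ with $S_n=\sum_{k=1}^{n}n!/k!$. The crux here is a parity count: $n!/k!=(k+1)\cdots n$ is even as soon as $k\le n-2$ (it contains two consecutive factors), equals $n$ for $k=n-1$, and equals $1$ for $k=n$, so $S_n\equiv 1+\mathbf 1_{\{n\text{ odd}\}}\pmod 2$ and the finite product is $(-1)^{n+1}$ for $n\ge 2$. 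For the tail I would use $n!/k!\le (n+1)^{-(k-n)}$, so that $\sum_{k>n}(n!/k!)^{2}\le \sum_{j\ge 1}(n+1)^{-2j}\to 0$; together with $\log\cos x\ge -x^{2}$ for small $x$ this gives $\prod_{k>n}\cos(\pi\, n!/k!)\to 1$. Hence $|C^{\lambda}(n!)|\to 1$ (with sign $(-1)^{n+1}$), which is exactly the input we need.

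Complete predictability and maximal rigidity then follow immediately: $\mathsf X^{\lambda}$ is analytic, in particular a.s.\ of class $\mathcal C^{1}$, and the sequence $t_n=n!\to\infty$ satisfies $|C^{\lambda}(t_n)|\to 1$, so the predictability criterion proved above applies and yields that $\mathsf Z$ is completely predictable, hence maximally rigid.

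For the nature of $\mu^{\lambda}$ I would first check the hypothesis $\lambda_n>\sum_{k>n}\lambda_k$ of the preceding Cantor-type proposition. Since $\sum_{k>n}\lambda_k=\pi\sum_{k>n}1/k!$ and $n!\sum_{k>n}1/k!=\sum_{k>n}n!/k!<1/n$, we indeed have $\sum_{k>n}\lambda_k<\lambda_n/n<\lambda_n$. That proposition then gives that $\mu^{\lambda}$ has no atoms, has Cantor-type support, and that $\mathsf X$ and $\mathsf Z$ are weakly mixing. To upgrade \emph{no atoms} to \emph{purely singular} it suffices to show the support $\Sigma$ is Lebesgue-null: covering $\Sigma$ by the $2^{n}$ intervals of radius $\sum_{k>n}\lambda_k$ centred at the partial sums $\sum_{k\le n}\varepsilon_k\lambda_k$ gives $|\Sigma|\le 2^{n}\cdot 2\sum_{k>n}\lambda_k\le 2^{n+1}\pi\cdot\frac{2}{(n+1)!}\to 0$, so $\mu^{\lambda}\perp\mathrm{Leb}$; being atomless and singular, it is purely singular continuous (consistently with the purity theorem cited above). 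Finally, for unboundedness I would argue by ergodicity: the random variable $M:=\sup_{t\in\mathbb R}|\mathsf X(t)|=\sup_{t\in\mathbb Q}|\mathsf X(t)|$ (by a.s.\ continuity) is measurable and strictly invariant under the shift flow, and since $\mu^{\lambda}$ is atomless the process is ergodic (Maruyama), so $M$ is a.s.\ a constant in $[0,\infty]$; if that constant were finite, say $c$, we would get $\P(|\mathsf X(0)|>c)=0$, impossible because $\mathsf X(0)\sim\mathcal N(0,1)$, whence $M=\infty$ a.s.

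The main obstacle is the first step: the tail estimate is routine, but pinning down the exact sign of the finite product requires the parity bookkeeping above, and this is precisely what distinguishes the factorial example from the dyadic/sine case $\lambda_k=2^{-k}$.
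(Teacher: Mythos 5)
Your proposal is correct, and on two of the four assertions it takes a genuinely different route from the paper. The core step is the same: split $C^{\lambda}(n!)$ into the finite product over $k\leqslant n$ and the tail, show the former is a unit sign and the latter tends to $1$ (your bound $n!/k!\leqslant (n+1)^{-(k-n)}$ versus the paper's $n!/k!\leqslant 1/k$ are interchangeable), then feed $|C^{\lambda}(t_n)|\to 1$ with $t_n=n!$ into the predictability criterion. Note that your parity bookkeeping gives the \emph{correct} sign: for $k\leqslant n-2$ the integer $n!/k!$ is even, so the finite product is $(-1)^{n}\cdot(-1)=(-1)^{n+1}$ (check $n=2$: $\cos(2\pi)\cos(\pi)=-1$), whereas the paper asserts $(-1)^{n}$; this is a harmless slip since only $|C^{\lambda}(n!)|\to1$ is used downstream. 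For pure singularity, the paper's proof is silent (it only invokes the earlier Cantor-type proposition, which gives atomlessness and weak mixing but not singularity, since a totally disconnected compact set can have positive measure); your covering of the support by $2^{N}$ intervals of radius $R_N\leqslant 2\pi/(N+1)!$ actually closes that gap. For unboundedness the two arguments are entirely different: the paper works with the canonical pseudo-metric $d(t,s)=\sqrt{1-C(t-s)}$, shows via a delicate counting of the sets $C_{K}$ that $(\mathbb{R},d)$ is not totally bounded, and invokes a theorem of Aza\"is--Wschebor; you instead observe that $\sup_{t}|\mathsf X(t)|$ is a shift-invariant variable, hence a.s.\ constant by Maruyama's ergodicity (already established from atomlessness), and a finite constant would contradict $\mathsf X(0)\sim\mathcal N(0,1)$. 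Your argument is considerably shorter and equally rigorous; the paper's buys quantitative information about the metric entropy of $(\mathbb{R},d)$ that the ergodic argument does not provide, but none of that extra information is used elsewhere.
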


\begin{proof}
Let $t_{n}=n!$.  We have
\begin{align*}
\prod_{k=1}^{n}\cos(t_{n}\lambda _{k})=\prod_{k=1}^{n}\cos(\pi n!/k!)=(-1)^{n}.
\end{align*}For  $k\geqslant n+1$,  
\begin{align*}
\cos\left(
\frac{\pi n!}{k!}
\right)\geqslant 1-\left(
\frac{\pi n!}{k!}
\right)^{2}\geqslant 1-\left(
\frac{\pi ^{2}}{k^{2}}
\right)
\end{align*}
hence  the first result is proved with
\begin{align*}
1\geqslant (-1)^{n}C(t_{n})\geqslant \prod_{k=n+1}^{\infty }\left(
1-\left(
\frac{\pi }{  k}
\right)^{2}
\right)\geqslant 1-c/n.
\end{align*}
 
 To prove the unboundedness of $\mathsf X^{\lambda }$, 
 consider the pseudo-metric on $\mathbb{R}$
\begin{align*}
d(t,s)=\sqrt{1-C(t-s)}.
\end{align*}
We will prove that $(\mathbb{R},d)$ cannot be covered by finitely many $d$-balls of some radius $\varepsilon >0$. Let $t\in B_{d}(0,\varepsilon )$, and for $k\in \mathbb{Z} $, let $n_{k}=n_{k}(t)$ be the integer such that for some $\varepsilon _{k}=\varepsilon _{k}(t)\in [-1/2,1/2)$
\begin{align*}
 {k!^{-1}t}=n_{k} +\varepsilon _{k}.
\end{align*}Then  for some $c>0$
\begin{align*}
d(t,0)^{2}\geqslant 1-\exp(\sum_{k} \ln( | \cos(\varepsilon _{k}\pi )) | )\geqslant c\sum_{k}\varepsilon _{k}^{2}
\end{align*}
hence $c\sum_{k}\varepsilon _{k}^{2}<\varepsilon ^{2}$, and in particular each $\varepsilon _{k}$ is smaller than $\delta :=\sqrt{c^{-1}}\varepsilon $. Up to diminishing $\varepsilon $, assume $\delta <1/4.$ Let us now cover $\mathbb{R}_{+}$ by the sets 
\begin{align*}
A_{m,k}=\{t=mk!+k!\varepsilon _{k}:\varepsilon _{k}\in [-1/2 ,1/2 ]\}
\end{align*}
for $k\in \mathbb{N} ,0<m\leqslant  k .$ We have for $t\in A_{m,k},0\leqslant l\leqslant k$, with $k^{(l)}=k(k-1)\dots (k-l+1),$
\begin{align*}
\frac{t}{(k-l)!}=mk^{(l)}+k^{(l)}\varepsilon _{k}=n_{k-l}(t)+\varepsilon _{k-l}(t),
\end{align*}
hence $\varepsilon _{k}\in C_{k^{(l)} }$, where 
\begin{align*}
C_{K }=\{t \in [-1/2,1/2]:d(Kt ,\mathbb{Z} )\leqslant \delta  \}, K>0.
\end{align*}
Let us then define $E_{0}=[-\delta ,\delta ]$ and for $0\leqslant l<k$,
\begin{align*}
E_{l+1}=E_{l}\cap C_{k^{(l+1)}}.
\end{align*}
Let $n_{l}$ be the number of maximal segments forming $E_{l}$. We prove by induction that $n_{l}\leqslant (2\delta )(4\delta )^{l-1}k^{(l)}$ for $k-l\geqslant \delta ^{-1}$, i.e. $l\leqslant l^*:=[k-\delta ^{-1}]$. Indeed, through the intersection with $C_{k^{(l)} }$, $E_{l}$ is made up of segments of length at most $a_{l}:=2\delta k^{-(l)}$, and each segment of length $a_{l}$ is intersected by at most $$a_{l}k^{(l+1)}+2=2\delta (k-l)+2\leqslant 4\delta (k-l)$$ segments of $C_{k^{(l+1)}}.$ It follows that 
\begin{align*}
n_{l+1}\leqslant n_{l}4\delta (k-l)\leqslant 2\delta (4\delta )^{l}k^{(l)}(k-l);
\end{align*}
which proves the induction. Hence the Lebesgue measure of $E_{l}$ for $l\leqslant l^*$ is smaller than $2\delta (4\delta )^{l}k^{(l)}2\delta k^{-(l+1)}\leqslant (4\delta )^{l+1}.$
We have for $l\geqslant l^*$, $E_{l}\subset  E_{l^{*}}$. 
It follows that
\begin{align*}
 | B_{d}(0,\varepsilon )\cap A_{m,k} | \leqslant &  | E_{k} | \leqslant  |  E_{l^{*}} | \leqslant (4\delta )^{l^{*}+1}.\\
\end{align*}
It  follows that for $\delta <1/4$
\begin{align*}
 | B_{d}(0,\sqrt{\varepsilon }) | \leqslant \sum_{k}\sum_{m}(4\delta )^{k-\delta ^{-1}-1} <\infty .
\end{align*}
By stationarity, $ | B_{d}(t,\varepsilon ) | = | B_{d}(0,\varepsilon ) | <\infty $ for $t\in \mathbb{R}$, hence $\mathbb{R}$ cannot be covered by a finite number of such balls, hence the sample paths of $\mathsf X^{\lambda }$ are a.s. unbounded (see for instance \cite[Th. 1.19]{AW}).

\end{proof}
%
%
%
%
%
%
%
%
%

\section{Variance lower bounds}
\label{sec:chaotic}

In this section we take $\mathsf X$ as a stationary Gaussian process whose reduced covariance function $C$ satisfies Geman's condition \eqref{eq:geman}. In particular,  $C'$ and $C''(0)$ exist, hence Fatou's lemma yields 
 that the spectral measure $\mu $ has a finite second moment, and 
\begin{align}
\label{eq:second-deriv}
C''(t)=-\int_{}t^{2}e^{ itx}\mu (dx), t\in \mathbb{R}.
\end{align} Recall that $C$ is normalised so that $C''(0)=-1.$ 
If Geman's condition is not satisfied (and $\varphi \geqslant 0$), the variance is infinite because the support of $\varphi $ has non-empty interior, and the result of the theorem holds true.

Let us start by the case where $C(\tau )=\pm 1$ for some $\tau >0$. It implies that $\mu $ is supported by $\frac{2\pi }{\tau }\mathbb{Z} $ and has (at least) an atom, and  that $\mathsf  X$ is a.s. $2\tau $-periodical. The behaviour of $N_{X}(\varphi _{T})$ as $T\to \infty $ is determined by the zeros $z_{1},\dots ,z_{Q}\in \mathsf  Z_{\mathsf  X}\cap [0,2\tau )$, for some random $Q\in \mathbb  N.$ We have 
\begin{align*}
T^{-1}N_{\mathsf  X}(\varphi _{T})=\sum_{i=1}^{Q}T^{-1}\sum_{k\in \mathbb{Z} }\varphi (T^{-1}(z_{i}+2k\tau ))=\sum_{i=1}^{Q}(2\tau )^{-1}\int_{}\varphi (t)dt+o(1)
\end{align*}
due to the hypotheses on $\varphi .$ If we upper bound $\varphi $ by $\|\varphi \|$ on its support, it easily follows by Lebesgue's Theorem that $T^{-2}V_{\mathsf  X}(\varphi _{T})$ converges to $\Var(Q)(2\tau )^{-2}\left(
\int_{}\varphi 
\right)^{2}$. The only case where $\Var(Q)=0$ is when $C(t)=\cos(t) $ up to rescaling and multiplication, which is the degenerate case excluded from the theorem. Hence in all other cases we have indeed a quadratic variance, and point (iii) is proved in this case.

Let us now assume that $C(t)\neq \pm 1$ for $t\neq 0.$ Following Kratz \& L\'eon, it is proved in \cite{AW} that we have the decomposition  
$$N_\mathsf X({T})=\E (N_\mathsf X({T}))+\sum_{q=1}^{\infty }N_{\mathsf  X,q}(T)$$ with
\begin{align*}
N_{\mathsf  X,q}(T)=\sum_{k=0}^{q}a _{k}d _{q-k}\int_{0}^{T}H_{k}(\mathsf X(t))H_{q-k}(\mathsf X'(t))dt
\end{align*}
where $H_{m},m\in \mathbb{N}$ is the $m$-th Hermite polynomial, and $a _{m},d_{m},m\in \mathbb{N}$ are coefficients that vanish for $m$ odd. In particular,
\begin{align*}
a_{0}=\frac{1}{\sqrt{2\pi }}, &\;a_{2}=\frac{1}{2\sqrt{2\pi }}\\
d_{0}=1,&\;d_{2}=-\frac{1}{2}
\end{align*}and
 $N_{\mathsf  X,q}(T)$ vanishes for odd $q.$ (The decomposition also holds if we only assume that $C$ is four times differentiable in $0$, see \cite{Kratz-survey}).

By linearity, the decomposition still holds if $\varphi $ is a piecewise constant compactly supported function, and $N_{\mathsf  X }(\varphi )$ has the $L^{2}$-orthogonal decomposition $\sum_{q=0}^{\infty }N_{\mathsf  X,q}(\varphi )$, with
\begin{align*}
N_{\mathsf  X,2 }(\varphi )=\frac{-1}{2\sqrt{2\pi }}\left[
\int_{\mathbb{R}}\varphi (t)\left(
H_{0}(\mathsf X(t))H_{2}(\mathsf X'(t))dt-\int_{-T}^{T}H_{2}(\mathsf X(t))H_{0}(\mathsf X'(t))dt
\right)
\right].
\end{align*}

Then, with $H_{0}=1,H_{2}(x)=x^{2}-1$, since for two standard Gaussian variables $\alpha ,\beta $ with correlation $\rho $, $\Cov(\alpha ^{2},\beta ^{2})=2\rho ^{2}$,
\begin{align*}
V_{\mathsf  X}(\varphi )
\geqslant &\Var (N_{\mathsf X  ,2}(\varphi ))\\
=&\frac{1}{4\pi }\int_{\mathbb{R}^{2}}\varphi (t)\varphi (s)\left[\Cov (\mathsf X(t)^{2},\mathsf X(s)^{2})+\Cov (\mathsf X'(t)^{2},\mathsf X'(s)^{2})-
2\Cov (\mathsf X(t)^{2},\mathsf X'(s)^{2}) 
\right]dtds.
\end{align*}
We can show that this bound can be extended to any $\varphi \in \mathcal{C}_{b}$ by using Fatou's lemma and an approximating sequence of piecewise constant functions $\varphi _{n},n\geqslant 1$.

Then, with $z=t-s,w=-s,$
\begin{align*}
\Var (N_\mathsf X(T))\geqslant&\frac{1}{4\pi } \int_{\mathbb{R}^{2}}[C(z)^{2}+C''(z)^{2}-2C'(z)^{2}]\left[
\int_{\mathbb{R}}\varphi ( {z-\omega}  )\varphi ( { \omega}  )dw
\right]dz\\
=&\frac{1}{4\pi }\int_{\mathbb{R}^{2}}[C(z)^{2}+C''(z)^{2}-2C'(z)^{2}]\varphi ^{\star 2}(z)dz
\end{align*}
where $\star $ denotes the convolution product. Denote by 
\begin{align*}
\hat \psi  (x)=\int_{\mathbb{R}}e^{ixt}\psi  (t)dt,x\in \mathbb{R},
\end{align*}the Fourier transform of a $L^{2}$ function $\psi $. By \eqref{eq:second-deriv},
\begin{align*}
\int_{\mathbb{R}}C(z)^{2}\varphi ^{\star 2}(z)dz=&\int_{\mathbb{R}}\varphi ^{\star 2} (z)\int_{\mathbb{R}^{2}}e^{ixz}e^{iyz}\mu (dx)\mu (dy)dz\\
=&\int_{\mathbb{R}}\hat\varphi (x+y)^{2}\mu (dx)\mu (dy)\\
\int_{\mathbb{R}}C'(z)^{2}\varphi ^{\star 2}(z)dz=&\int_{\mathbb{R}}\varphi ^{\star 2} (z)\int_{\mathbb{R}^{2}}(ix)(iy)e^{ixz}e^{iyz}\mu (dx)\mu (dy)dz\\
=&-\int_{\mathbb{R}}xy\;\hat\varphi (x+y)^{2}\mu (dx)\mu (dy)\\
\int_{\mathbb{R}}C''(z)^{2}\varphi ^{\star 2}(z)dz=&\int_{\mathbb{R}}\varphi ^{\star 2} (z)\int_{\mathbb{R}^{2}}x^{2}y^{2}e^{ixz}e^{iyz}\mu (dx)\mu (dy)dz\\
=&\int_{\mathbb{R}}x^{2}y^{2}\,\hat\varphi (x+y)^{2}\mu (dx)\mu (dy)\\
\end{align*}
whence finally,  
\begin{align*}
V_{\mathsf  X}(\varphi )\geqslant & \frac{1}{4\pi }\int_{\mathbb{R}^{2}}(1+xy)^{2}\hat\varphi (x+y)^{2}\mu (dx)\mu (dy).
\end{align*}
Then remark that $\hat\varphi _{T}(x)=T\hat \varphi (Tx)$. Since $\int_{}\varphi \neq 0$, $\hat \varphi (0)\neq 0$, by continuity  there is $\alpha >0$ such that $ | \hat \varphi (x) | \geqslant  | \int{\varphi } | /2>0$ for $ | x | \leqslant \alpha .$ It simplifies the argument (without loss of generality) to rescale $\varphi $ so that $\alpha =1$. Then, for some $c_{\varphi }>0,$
\begin{align}
\label{eq:var-phi-mu}
\Var(N_{\mathsf  X}(\varphi _{T}))\geqslant &c_{\varphi }T^{2}\int_{\mathbb{R}}\mathbf{1}_{\{ | T(x+y) | <1 \}}(1+xy)^{2}\mu (dx)\mu (dy).
\end{align}
If $\mu $ has an atom $x_{0}\notin \{-1,1\}$, the right hand side is larger than $c_{\varphi }T^{2}(1-x_{0}^{2})^{2}\mu (\{x_{0}\})^{2}$, hence Theorem \ref{thm:main-variance}-(iii), is proved.

Let now $\varepsilon >0$ and  a set $A$ being either $$(-\infty ,-1-\varepsilon ],\;[-1+\varepsilon ,1-\varepsilon ] \text{\rm{ or }}[1+\varepsilon ,\infty ),$$ the choice being made so that, with $ \mu _{\varepsilon }=\mu \mathbf{1}_{\{A\}}$:   $\mu_{\varepsilon }\neq 0 $ (recall that $\mu $ was assumed to not be concentrated on $\{\pm1\}$), and if $\mu $ does not have a density on $\mathbb{R}\setminus\{-1,1\}$, $\mu_{\varepsilon } $ does not have a density  either. Also define
\begin{align*}
C_{\varepsilon }(t)=\int_{\mathbb{R}}e^{ixt}\mu _{\varepsilon }(dx).
\end{align*}For $T>2 \varepsilon ^{-1}$, we have for some $c_{\varepsilon }>0$,  for $x,y\in A,$
\begin{align*}
(1+xy)^{2}\mathbf{1}_{\{ | T(x+y) | <1 \}}
\geqslant& c_{\varepsilon }\mathbf{1}_{\{ | T(x+y) | <1 \}}\geqslant c_{\varepsilon }\Delta _{T}(x+y)
\end{align*} 
where $ \Delta _{T}(x):=\mathbf{1}_{\{ |T x | <1\}}(1- | Tx | ).$
The Fourier inversion formula yields for $t\in \mathbb{R}$
 $${\Delta _{T}}(x)=\frac{2T^{-1}}{\sqrt{2\pi }}\int_{}\sinc(T^{-1}t)^{2}e^{2itx}dt,$$ 
hence \eqref{eq:var-phi-mu} yields for some $c_{\varphi }'>0$
\begin{align}
\notag\Var(N_\mathsf X(\varphi _{T}))\geqslant &c_{\varphi }T^{2}\int_{}\int_{}\Delta _{T}(x+y)\mu_{\varepsilon }(dx)\mu_{\varepsilon } (dy) \\
\notag=&c_{\varphi }'T\int_{}\int_{}\int_{}e^{2ixt}e^{2iyt}\sinc(T^{-1}t)^{2}\mu _{\varepsilon }(dx)\mu _{\varepsilon }(dy)\\
\notag=&c_{\varphi }'T \int_{\mathbb{R}}\sinc^{2}(T^{-1}t)C_{\varepsilon }(2t)^{2}dt\\
\label{eq:lin}\geqslant &c_{\varphi }'\sin(1)^{2}T \int_{\mathbb{R}}\mathbf{1}_{\{ | t | <T\}}C_{\varepsilon }(2t)^{2}dt
\end{align}
and we have used the finiteness of $\mu _{\varepsilon }$ and the integrability of $\sinc^{2}.$ Since by construction $\mu _{\varepsilon }$ has positive mass, $C_{\varepsilon }$ is not identically zero and Theorem \ref{thm:main-variance}-(i) is proved.  

In view of proving (ii), we assume that the variance is linear, hence we need to show that  $C''+C\in L^{2}(\mathbb{R})$.  By \eqref{eq:lin}, $C_{\varepsilon }$ is $L^{2}$, hence $\mu _{\varepsilon }$ has a $L^{2}$ density, and $\mu $ also by construction of $\mu _{\varepsilon }$. Let $f$ be the density of $\mu $.
 By \eqref{eq:var-phi-mu}, (after changing $y$ to $-y$ and using the symmetry of $\mu $)
\begin{align*}
\Var(N_\mathsf X(\varphi _{T}))\geqslant &c_{\varphi }T^{2}\int_{}\int_{}(1-xy)^{2}\mathbf{1}_{\{ | x-y | <T^{-1}\}}f(x)f(y)dxdy\\
\liminf_{T}T^{-1}\Var(N_\mathsf X(\varphi _{T}))\geqslant &c_{\varphi }\int_{\mathbb{R}\setminus\{-1,1\}}\left(
 \liminf_{T}T\int_{x-1/T}^{x+1/T}(1-xy)^{2}f(y)dy
\right)f(x)dx\\
=&c_{\varphi }\int_{}(1-x^{2})^{2}f(x)^{2}dx\\
=& c_{\varphi }\int_{}(C''+C)^{2}
\end{align*}
by Parseval's identity, hence $C''+C\in L^{2}$.

To conclude the proof of (ii), let us assume now that $\mu $ has a $L^{2}$ density $f$ on a bounded neighbourhood $I$ of $\{-1,1\}$, and show that \eqref{eq:C2-L2} is equivalent to $C+C''\in L^{2}$. If $\mu $ does not have a density on all $\mathbb{R}$, then $C\notin L^{2}$, otherwise $\widehat C$ would be a $L^{2}$ density of $\mu $. Then, if $C+C''\in L^{2}$, Fourier's inversion formula and \eqref{eq:second-deriv} yield 
\begin{align*}
C(t)+C''(t)=\frac{1}{\sqrt{2\pi }}\int_{}\widehat {(C+C'')}(x)e^{itx}dx= \int_{}(1-x^{2})e^{itx}\mu (dx), t\in \mathbb{R},
\end{align*}
hence $\mu $ has density $\widehat {C+C''}(x)(1-x^{2})^{-1}$, which leads to a contradiction. Hence none of the conditions hold if $\mu $ does not have a density.
 
Let us assume finally  that $\mu $'s density $f $ can be extended to all $\mathbb{R}$. Then $$C(t)=\int_{}e^{ixt}f(x)dx,\;\;C''(t)=-\int_{}x^{2}f(x)e^{ixt}dx,\;\;(C+C'')(t)=\int_{}(1-x^{2})f(x)e^{ixt}dx.$$
The integrands are square integrable on $I$, so we need to prove the equivalence on $B:=\mathbb{R}\setminus I$. We have $$C\in L^{2}(B)\Leftrightarrow f \in L^{2}(B),C''\in L^{2}(B)\Leftrightarrow \int_{B}x^{4}f(x)^{2}dx<\infty ,\;\;C''+C\in L^{2}(B)\Leftrightarrow \int_{B}(1-x^{2})^{2}f^{2}(x)dx<\infty,$$ hence the equivalence stems from the existence of $c>0$ such that for $x\notin I$
 \begin{align*}
c(1+x^{4})\leqslant (1-x^{2})^{2}\leqslant 1+x^{4}.
\end{align*}

 \begin{remark}
 One might think that going for higher order chaoses could help replace (ii) by the sufficient condition $C,C''\in L^{2}$, but the gap present at $\pm 1$ is necessarily still present at higher order chaoses because the zeros of $C(t)=\cos(t)$ satisfy the same chaotic decomposition (see \cite[Prop. 2.2]{Kratz-survey}), hence the variance of every chaos is bounded in $T.$ It remains open to find a necessary and sufficient condition for linear variance.
\end{remark}

\begin{proposition}
\label{prop:variance-factorial}
If
\begin{align*}
C_{\mathsf  X}(t)=\prod_{k}\cos(t/k!),
\end{align*}
then  for all $\varepsilon >0,$
\begin{align*}
T^{\varepsilon -2}\Var (N_\mathsf X(T))\to \infty 
\end{align*}
as $T\to \infty .$
\end{proposition}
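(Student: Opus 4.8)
The plan is to feed the specific measure $\mu$ into the general lower bound \eqref{eq:var-phi-mu} already established in this section, and to exploit the factorial sparsity of the frequencies. First I would take $\varphi=\mathbf 1_{[-1,1]}$, which lies in $\mathcal C_b$ and is nonnegative, and note that Geman's condition holds because $\mathsf X$ is analytic; replacing $\mathsf X(t)$ by $\mathsf X(\alpha t)$ to enforce the normalization $C''(0)=-1$ merely rescales $T$ by a constant and so cannot change the asserted growth order. After this normalization $\mu$ is the (symmetric) law of $Y=\alpha\sum_{k\ge 1}\varepsilon_k/k!$ with $(\varepsilon_k)$ i.i.d.\ Rademacher and $\alpha=(\sum_k (k!)^{-2})^{-1/2}$. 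Substituting $y\mapsto -y$ in \eqref{eq:var-phi-mu} and writing $Y,Y'$ for two independent copies turns the bound into
\[
V_{\mathsf X}(T)\ \ge\ c_\varphi\,T^2\,\E\!\left[(1-YY')^2\,\mathbf 1_{\{|Y-Y'|<1/T\}}\right].
\]

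The only real obstacle is the weight $(1-xy)^2$, which degenerates exactly at the ``dark spot'' $x=y=\pm 1$; I would sidestep it by localizing $Y,Y'$ near the top $s^*=\alpha(e-1)$ of the support, which is bounded away from $1$. Indeed $\sum_k (k!)^{-2}\le\sum_k 1/k!=e-1$, so $\alpha\ge(e-1)^{-1/2}$ and $s^*=\alpha(e-1)\ge\sqrt{e-1}>1$. Forcing $\varepsilon_k=\varepsilon_k'=+1$ for all $k\le n$ is an event of probability $4^{-n}$ on which both $Y$ and $Y'$ lie within $2\alpha\sum_{k>n}k!^{-1}\le 4\alpha/(n+1)!$ of $s^*$; hence $|Y-Y'|\le 4\alpha/(n+1)!$ and, for $n$ large, $Y,Y'\ge (1+s^*)/2>1$, so $(1-YY')^2\ge c_0>0$. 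Choosing $n=n(T)=\min\{n:(n+1)!>4\alpha T\}$ makes $4\alpha/(n+1)!<1/T$, so the indicator equals $1$ on this event; restricting the expectation to it yields $V_{\mathsf X}(T)\ge c_\varphi c_0\,T^2\,4^{-n(T)}$.

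It remains to show that the combinatorial cost $4^{-n(T)}$ is subpolynomial, which is precisely where the factorial growth enters. By minimality $n(T)!\le 4\alpha T$, so the elementary bound $\log(m!)\ge m\log m-m$ gives $n(T)\log n(T)-n(T)\le \log(4\alpha T)$; since $n(T)\to\infty$ this forces $n(T)=o(\log T/\log\log T)$ and in particular $n(T)\log 4=o(\log T)$, i.e.\ $4^{-n(T)}=T^{-o(1)}$. Thus for every $\delta>0$ one has $4^{-n(T)}\ge T^{-\delta}$ for $T$ large, whence $V_{\mathsf X}(T)\ge c_\varphi c_0\,T^{2-\delta}$. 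Given $\varepsilon>0$, taking $\delta=\varepsilon/2$ yields $T^{\varepsilon-2}V_{\mathsf X}(T)\ge c_\varphi c_0\,T^{\varepsilon/2}\to\infty$, which is the claim.

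The main difficulty, as indicated, is purely the coupling of the two constraints ``$(1-YY')^2$ bounded below'' and ``$|Y-Y'|<1/T$'': both must hold simultaneously, and they are met by pinning the leading $n$ signs of both copies at the top of the support (away from the singular value $1$), while the factorial decay of the tail $\sum_{k>n}k!^{-1}$ is exactly what lets the required index $n$ grow only like $\log T/\log\log T$, making the probability penalty $4^{-n}$ vanish slower than any power of $T$.
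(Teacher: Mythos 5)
Your proof is correct and follows essentially the same route as the paper: both start from the lower bound \eqref{eq:var-phi-mu} and exploit the factorial decay of the tails $\sum_{k>n}k!^{-1}$ to show that the relevant "collision" probability at scale $1/T$ is only $T^{-o(1)}$, because the truncation index needed to resolve that scale grows like $\log T/\log\log T$. The only (harmless) difference is that you pin both copies to the single extreme atom of the truncated variable, paying $4^{-n}$ and simultaneously keeping the weight $(1-YY')^2$ bounded below, whereas the paper sums over all $2^{N}$ atoms to get $2^{-N}$ and handles the weight by observing that the support of $\mu$ stays away from $\pm 1$.
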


\begin{proof}
Let 
\begin{align*}
R_{N}=\sum_{k>N}k!^{-1}.
\end{align*}
Recall that the spectral measure $\mu $ is the law of $Y=\lim_{N}Y_{N}$ where $Y_{N}=\sum_{k=1}^{N}\frac{\varepsilon _{k}}{k!}$, for random iid Rademacher variables $(\varepsilon _{k})$.
In particular,  $|Y-Y_{N}|<R_{N}$ a.s. Two consecutive points of the support of $Y_{N}$ have distance $2/n!>2R_{N}$ (proved by induction) and they all have mass $2^{-N}$.  Let $Y',Y'_{N},N\in \mathbb{N}$ be iid copies of $Y,Y_{N}$.
Since $Y$'s support is in $(-1,1)$, there is $c>0$ such that for $T$ sufficiently large, using \eqref{eq:var-phi-mu},
\begin{align*}
\Var(N_\mathsf X(T))\geqslant c T^{2}\P (  | Y-Y' | <1/T ).
\end{align*}
Let $N=N_T$ be the  integer such that $R_{N-1}\geqslant \frac{1}{4T}>R_{N}$. 
\begin{align*}
\P (|Y-Y'|<1/T)\geqslant &\P (|Y_{N}-Y'_{N}|<2R_{N})\\
= & \sum_{x\in \text{\rm{Supp}}(Y_{N})}\P (Y_{N}=Y'_{N}=x)\\
= &2^{-N}.
\end{align*}
Then we have by convexity
\begin{align*}
\ln(4T)\geqslant \ln(R_{N-1}^{-1})>\ln((N-1)!)=\sum_{k=1}^{N-1}\ln(k)>(N-1)\ln((N-1)/2).
\end{align*}
It follows that 
\begin{align*}
(4T)^{\varepsilon -2}\Var(N_{\mathsf X}(T))\geqslant \exp[\varepsilon(N-1)\ln((N-1)/2))]2^{-N}\to \infty .
\end{align*}

\end{proof}

\section*{Acknowledgements} I warmfully thank Anne Estrade, with whom I had many discussions along the elaboration of this work, about Gaussian processes and their zeros. I also wish to thank Jose L\'eon for a discussion around the variance of Gaussian zeros.

This work was partially funded by the grant {\it Emergence en recherche} from the Idex {\it Universit\'e de Paris}.

\bibliographystyle{plain}
\bibliography{../../bibi2bis}

\end{document}